\newcommand{\todo}[1]{{\color{magenta}#1}}   
\numberwithin{equation}{section}
\theoremstyle{plain}
\newtheorem{thm}{Theorem}[section]
\newtheorem{prop}{Proposition}[section]
\newtheorem{rmk}{Remark}[section]
\newtheorem{cor}{Corollary}[section]
\newcommand{\red}{\textcolor{red}}
\newcommand{\N}{\mathbb{N}}
\newcommand{\R}{\mathbb{R}}
\newcommand{\hatm}{\hat{m}}
\newcommand{\vir}[1]{``#1''}
\newcommand{\pg}[1]{\left\{#1\right\}}
\newcommand{\pq}[1]{\left[#1\right]}
\newcommand{\pt}[1]{\left(#1\right)}
\newcommand{\abs}[1]{\left\vert#1\right\vert}
\newcommand{\bs}[1]{\mathbf{#1}}
\newcommand{\ptn}{p_\theta^{(n)}}
\newcommand{\Ptn}{P_\theta^{(n)}}
\newcommand{\pto}{p_{\theta_0}^{(n)}}
\newcommand{\Pto}{P_{\theta_0}^{(n)}}
\newcommand{\ptzo}{p_{(\tau_0,\,\zeta_0)}^{(n)}}
\newcommand{\ptnz}{p_{(\tau_n,\,\zeta)}^{(n)}}
\newcommand{\pyz}{p^{(n)}_{(Y_n^{(\omega)}(\omega'),\,\zeta)}}
\newcommand{\MMLE}{marginal MLE}
\newcommand{\argmax}{\operatornamewithlimits{argmax}}
\newcommand{\data}{x_{1:n}} 
\newcommand{\Data}{X_{1:n}} 
\newcommand{\hatl}{\hat{\lambda}}
\begin{document}

\allowdisplaybreaks

\begin{center}
{\bf \Large Bayes and empirical Bayes: do they merge?}

\bigskip

Sonia Petrone$^{1}$, Judith Rousseau$^{2}$ \& Catia Scricciolo$^{3}$

\bigskip

{\it $^{1}$ Bocconi University, sonia.petrone@unibocconi.it

$^{2}$ CREST-ENSAE and Universit\'e Paris Dauphine, rousseau@ceremade.dauphine.fr

$^{3}$ Bocconi University and CREST-ENSAE, catia.scricciolo@unibocconi.it}

\end{center}

\bigskip

\begin{abstract}
Bayesian inference is attractive for its coherence and good frequentist properties. However, it is a common experience that eliciting a honest prior may be difficult and, in practice, people often take an {\em empirical Bayes} approach, plugging empirical estimates of the prior hyperparameters into the posterior distribution. Even if not rigorously justified, the underlying idea is that, when the sample size is large, empirical Bayes leads to \vir{similar} inferential answers. Yet, precise mathematical results seem to be missing. In this work, we give a more rigorous justification in terms of merging of Bayes and empirical Bayes posterior distributions. We consider two notions of merging: Bayesian weak merging and frequentist merging in total variation. Since weak merging is related to consistency, we provide sufficient conditions for consistency of empirical Bayes posteriors. Also, we show that, under regularity conditions, the empirical Bayes procedure asymptotically selects the value of the hyperparameter for which the prior mostly favors the \vir{truth}. Examples include empirical Bayes density estimation with Dirichlet process mixtures.
\end{abstract}

\bigskip

{\bf Keywords:} Consistency, Bayesian weak merging, Frequentist strong merging, Maximum marginal likelihood
estimate, Dirichlet process mixtures, $g$-priors.

\bigskip

\section{Introduction and motivation} \label{sec:intro}
The Bayesian approach to inference is appealing in treating uncertainty probabilistically through conditional distributions. If $(X_1,\,\ldots,\,X_n)|\theta$ have joint density $\ptn$
and $\theta$ has prior density $\pi(\theta|\lambda)$,
then the information on $\theta$, given the data, is expressed through the conditional, or posterior, density $\pi(\theta|\lambda,\, x_1,\,\ldots,\,x_n) \propto \ptn(x_1,\,\ldots,\,x_n)\pi(\theta|\lambda)$.
Despite Bayes procedures are increasingly popular, it is a common experience that expressing honest prior information can be difficult and, in practice, one is tempted to use some estimate $\hatl_n\equiv\hatl_n(x_1,\,\ldots,\,x_n)$ of the prior hyperparameter $\lambda$ and a posterior distribution $\pi(\cdot|\hatl_n,\,x_1,\,\ldots,\,x_n)$. This mixed approach is usually referred to as {\em empirical Bayes} in the literature (see, \emph{e.g.}, \cite{LC98}) and we shall also refer to it as empirical Bayes (EB) in this paper. The underlying idea is that, when $n$ is large, EB should reasonably lead to inferential results similar to those of any Bayes procedure. Thus, an empirical Bayesian would achieve the goal of inference without completely specifying a prior. From a Bayesian point of view, when there is uncertainty on $\lambda$, a cleaner solution than EB would be to put a hyperprior on $\lambda$. However, using a hierarchical prior is often computationally heavy, thus EB appears as an attractive alternative that is expected to be \vir{better} than fixing a \vir{wrong} value of the hyperparameter. Recently, \cite{scott:berger:10} showed that, in the special case of variable selection in regression models, EB procedures could have very strange features and rather pathological behaviours. They showed that if a Bayesian approach is based on a prior on models of the form
$$\Pi(M_{\boldsymbol{\gamma}}|p)\propto p^{k_{\boldsymbol{\gamma}}}( 1 - p)^{m - k_{\boldsymbol{\gamma}}}, \qquad p\in (0,\,1),$$
where $M_{\boldsymbol{\gamma}}$ denotes the model associated to the vector of inclusions ${\boldsymbol{\gamma}}\in \{0,\,1\}^m$, $m$ being the total number of candidate covariates, and $k_{\boldsymbol{\gamma}}$ corresponds to the number of covariates used in the model ${\boldsymbol{\gamma}}$, then the maximum marginal likelihood estimator (\MMLE) for $p$ can be equal to $0$ or $1$ with positive probability. This corresponds to a very degenerate EB prior on the distribution of models. This might still lead to interesting  pointwise estimates of the model or of the whole parameter, however, in terms of distribution (posterior) is far from being satisfactory.

In this paper, we try to shed light on such phenomena by answering the following question:
is it actually true that an EB procedure is asymptotically close to a regular Bayes procedure?
Of course, it depends on what \textit{close} here means. So, first we consider a weak notion of closeness which mainly corresponds to consistency and is still satisfied in the above \textit{pathological} example. Then, we give a generic explanation of the above phenomenon, describing when and why \MMLE\, EB procedures will be pathological or, on the contrary, when and why they will have some quality of optimality.

We start by formalizing the asymptotic comparison in terms of {\em merging} of Bayes and EB procedures.
We consider two notions of merging. First, we study {\em Bayesian weak merging} in the sense of \cite{DiaconisFreedman1986}, which means that any Bayesian is sure that her/his posterior and the EB posterior will eventually be close, in the sense of weak convergence.
It thus appears as a minimal requirement in order for the EB procedure to be reasonable from a Bayesian viewpoint. Then, we study {\em frequentist strong merging} in the sense of \cite{ghoshRamamoorthi2003},
which means that the Bayes and EB posterior distributions will be eventually close, in the total variation distance, $P_{0}^\infty$-almost surely, where $P_0^\infty\equiv P_{\theta_0}^\infty$ denotes the probability law of $(X_i)_{i\geq1}$ under $\theta_0$, the \emph{true} value, in the frequentist sense, of the parameter $\theta$, usually referred to as the \textit{true} distribution. It is worth noting that, when strong merging holds, if Bernstein von-Mises holds in the $L_1$-sense for the Bayes posterior, then it also holds for the EB posterior. We prove that, under some conditions, Bayesian and frequentist merging of Bayes and EB procedures hold. However, these results require conditions, possibly more restrictive than expected. Thus, a further contribution of this work is to identify some typical situations wherein merging fails, which, despite appearances, are not surprising. These results have therefore the practical interest of characterizing, at least in the parametric case, those families of priors to be used with regard to EB procedures and those to be avoided, in particular if one is not merely interested in point estimation, but in some more general characteristics of the posterior distribution such as credible regions.

Developing from \cite{DiaconisFreedman1986}, we see (Section \ref{sec:weak-consistency}) that weak merging of Bayes and EB posterior distributions holds if and only if the EB posterior is weakly consistent, in the frequentist sense, at $\theta_0$, for every $\theta_0$ in the parameter space $\Theta$. Thus, conditions for weak consistency of EB posteriors are needed.
This provides a Bayesian motivation for studying frequentist consistency of EB posteriors.
Consistency is, however, of autonomous interest as well and we consider it in a general context, beyond the case of exchangeable (or i.i.d. $\sim P_{\theta_0})$ data. We provide sufficient conditions for
consistency of EB posteriors that cover both parametric and nonparametric cases.
In fact, an EB approach is even more tempting in nonparametric problems, since frequentist properties of Bayes procedures are known to crucially depend on the fine details of the prior \citep{DiaconisFreedman1986} and on a careful choice of the prior hyperparameters. We exhibit examples to illustrate EB consistency of Dirichlet process mixtures which is a commonly used nonparametric prior. Conditions for EB consistency seem stronger than those needed for consistency of Bayes posteriors: although for Bayes posteriors the so-called \emph{Kullback-Leibler prior support condition} implies weak consistency, it is not the case for EB posteriors. Thus, to validate EB posteriors we require stronger conditions and provide a counterexample where the EB posterior is not weakly consistent, despite validity of the Kullback-Leibler prior support condition.

Even when consistency and weak merging hold, simple examples show that the EB posterior can have unexpected and counterintuitive behaviors. Frequentist strong merging is a way to refine the analysis, allowing to perceive differences between Bayes and EB posterior distributions, even at first-order asymptotics.
Obtaining strong merging of Bayes posteriors in nonparametric contexts is often impossible since pairs of priors are typically singular.
Thus, in tackling this issue, we concentrate on parametric models and on the specific, but important, case of the \MMLE\, $\hatl_n$. In this setup, we find that the behavior of the EB posterior is essentially driven by the behavior of the prior at $\theta_0$.
Roughly speaking, if $\sup_{\lambda\in\Lambda}\pi(\theta_0|\lambda)= \infty$, then the EB posterior will tend to concentrate \vir{too much} around $\theta_0$ to merge with any posterior associated with a positive and continuous (at $\theta_0$) prior density. We illustrate this behavior in Bayesian regression with $g$-priors. Conversely, if $\sup_{\lambda\in\Lambda}\pi(\theta_0|\lambda)<\infty$, then frequentist strong merging holds and $\hat{\lambda}_n$ converges to the set of values for which such supremum is attained, say $\lambda_0$ assuming here uniqueness for ease of exposition. This value can be understood as the \textit{prior oracle} as it is the value of $\lambda$ that mostly favors $\theta_0$. In other terms, $\lambda_0$ is that value which allows to make the most precise statements \emph{a priori} on $\theta_0$. Under this respect, the EB posterior achieves some kind of optimality. An open issue is whether $\lambda_0$ is also {\em optimal} with respect to some
other criteria. We briefly discuss this and related open issues at the end of the paper.

We underline that, in the literature, the term \vir{empirical Bayes} is used with different meanings. One is that herein considered; yet, people also refer to empirical Bayes in problems where a \vir{prior} is introduced, but some frequentist interpretation of it is possible, typically in mixture models where $X_i|\lambda \overset{\textrm{i.i.d.}}{\sim}\int_{\Theta}p_\theta(\cdot)\,\mathrm{d}G(\theta|\lambda)$.
In this case, EB corresponds to maximum likelihood estimation of $\lambda$, \emph{i.e.}, MLE of the mixing distribution. A Bayesian approach to these problems would assign a prior distribution on $\lambda$. In this case, a comparison between Bayes and EB reduces to the interesting, but more standard, comparison between MLE and Bayes procedures and it is not studied in this note.

\subsection{General context and notation}\label{subsec:nota}
Let ${\cal X}$ and $\Theta$ denote the observational space and the parameter space, respectively. They can be very general, we only require that they are complete and separable metric spaces, equipped with their Borel $\sigma$-fields. Hereafter, any parameter space will follow the same assumption and, for such a space $\Theta$, the Borel $\sigma$-field is denoted by $\mathcal{B}(\Theta)$. Throughout the paper, unless otherwise stated, the same symbol $d$ is used to denote any metric (possibly semi-metric). Let $(X_i)_{i\geq 1}$ be a sequence of random elements, with the $X_i$'s taking values in $\cal X$. Suppose that the probability measure of $(X_i)_{i\geq 1}$ is modeled as a family $\{P_\theta^\infty:\,
\theta \in \Theta\}$. We denote by $\Ptn$ the joint probability law of $(X_1,\, \ldots,\, X_n)$.
We assume that $\Ptn$ is dominated by a common $\sigma$-finite measure $\mu$ (in principle, the dominating measure may depend on $n$, but we drop the $n$ to simplify the notation) and denote by $\ptn$ the density of $\Ptn$ w.r.t. $\mu$.

Let $\{\Pi(\cdot|\lambda):\,\lambda \in \Lambda\}$ be a family of prior probability distributions on $\Theta$. Given a prior $\Pi(\cdot|\lambda)$, we denote by $\Pi(\cdot|\lambda,\,X_1,\, \ldots,\, X_n)$ the corresponding posterior distribution of $\theta$, given $(X_1,\,\ldots,\,X_n)$. Since the model is dominated, the natural version of this conditional distribution is given by the Bayes' rule. In the sequel, we use the short notation $X_{1:n}:=(X_1,\,\ldots,\,X_n)$, $x_{1:n}:=(x_1,\,\ldots,\,x_n)$ and $x^\infty:=(x_1,\,x_2\ldots\,)$.

The EB approach consists in estimating the hyperparameter $\lambda$ by $\hatl_n\equiv\hatl_n(\Data)$ and plugging the estimate into the posterior distribution. In general, $\hatl_n$ takes values in the closure $\bar{\Lambda}$ of $\Lambda$. For $\lambda_0$ in the boundary $\partial\Lambda$ of $\Lambda$, we define $\Pi(\cdot|\lambda_0)$ as the weak limit of $\Pi(\cdot|\lambda)$ for $\lambda \rightarrow \lambda_0$, when it exists. We use the notation $P_n \Rightarrow P$ to mean that $P_n$ converges weakly to $P$, for any probabilities $P_n,\,P$.
We shall say that the EB posterior is well defined if $\Pi(\cdot|\hatl_n,\,\Data)$ is a probability measure for all large $n$, $P_\theta^\infty$-almost surely, for all $\theta$.
Then, the EB posterior is defined as
$$\Pi(B|\hatl_n,\,\Data)=\frac{\int_B \ptn(\Data)\,\mathrm{d}\Pi(\theta|\hatl_n)}
{\int_\Theta \ptn(\Data)\,\mathrm{d}\Pi(\theta|\hatl_n)} \qquad\mbox{for all Borel sets $B$.}
$$
In what follows, the EB posterior corresponding to the estimator $\hatl_n$
will be also denoted by $\Pi_{n,\,\hatl_n}$. Throughout the paper,
the EB posterior is always tacitly assumed to be well defined.

Many types of estimators $\hatl_n$ can be considered: in particular, the \MMLE, defined as
$$\hatl_n \in \argmax_{\lambda\in\Lambda}\, m(\Data|\lambda),\qquad
 \makebox{where }\,\,\, m(\Data|\lambda):=\int_\Theta \ptn(\Data)\,\mathrm{d}\Pi(\theta|\lambda),$$
is the most popular. Whenever we consider the \MMLE, we assume that $\sup_{\lambda\in\Lambda} m(\Data|\lambda)<\infty$ for all large $n$, $P_\theta^\infty$-almost surely, for all $\theta$,
and write $\hatm(\Data):= m(\Data|\hatl_n)$.
We shall present general results for the EB posterior without specifying the type of estimator $\hatl_n$ considered,
as well as specific results for the \MMLE.

The paper is organized as follows. In Section~\ref{sec:weak-consistency}, we study Bayesian merging and consistency of EB posteriors. Specifically, in subsection~\ref{sec:weakMerging}, we begin by studying weak merging of Bayes and EB procedures for exchangeable sequences. In subsection~\ref{sec:consistency},
we give sufficient conditions for consistency of the EB posterior for non-i.i.d. data.
Parametric and nonparametric examples are provided in subsection~\ref{sec:examples}.
In Section~\ref{sec:frequentistMerging}, we study frequentist strong merging and obtain,
as a by-product, that, when strong merging takes place, the EB procedure leads to an \emph{oracle} choice of the prior hyperparameter. Situations where strong merging fails are illustrated in Bayesian
regression with $g$-priors in subsection~\ref{sec:gpriors}.
Some open issues are discussed in Section~\ref{sec:final}.


\section{Bayesian weak merging and consistency} \label{sec:weak-consistency}
We begin by studying {\em Bayesian weak merging} in the sense of \cite{DiaconisFreedman1986}
of Bayes and EB procedures in the basic case of exchangeable data.
Since Bayesian merging is intimately related to consistency, we then give conditions for consistency of the EB posterior in the general case.

\subsection{Bayesian merging of Bayes and empirical Bayes inferences} \label{sec:weakMerging}
Merging of Bayes and EB inferences formalizes the idea that
two posteriors or, in a predictive setting, two predictive distributions of all future observations, given the past, will eventually be close. A well-known result by \cite{blackwellDubins1962} establishes that, for $P$ and $Q$ probability laws of a process $(X_i)_{i\geq1}$, if $P$ and $Q$ are mutually absolutely continuous, then there is strong merging of the predictions of future events, given the increasing information provided by the data $\Data$. For exchangeable $P$ and $Q$ corresponding to priors $\Pi$ and $q$, respectively, $P$ and $Q$ are mutually absolutely continuous if and only if $\Pi$ and $q$ are such. However, the EB approach only gives a sequence of \vir{posterior distributions} $\Pi_{n,\,\hatl_n}$, without having a properly defined probability law of $(X_i)_{i\geq1}$.
Thus, the above result on strong merging does not apply. Furthermore, Blackwell and Dubins' result does not apply when the priors are singular, as it is often the case in nonparametric problems.

\cite{DiaconisFreedman1986} gave a notion of {\em weak merging} that applies even when strong merging does not. They showed that two Bayesians with different priors will merge weakly if and only if \emph{one} Bayesian has weakly consistent posterior, in the frequentist sense, at $\theta$, for every $\theta\in \Theta$. We show that an analogous result holds for Bayes and EB: in fact, EB merges with any true Bayes on $\Theta$ if and only if \emph{the} empirical Bayesian has posterior weakly consistent at $\theta$, for every $\theta\in \Theta$. Recall that two sequences of probability measures $p_n$ and $q_n$ are said to \emph{merge weakly} if and only if
$|\int g\,\mathrm{d}p_n - \int g\,\mathrm{d}q_n|\rightarrow0$ for all continuous and bounded functions $g$.

The results are herein restricted to the case of exchangeable sequences, thus, given $\theta$, the $X_i$'s are independent and identically distributed (i.i.d.) with common distribution $P_\theta$. We denote by $P_\theta^\infty$ the corresponding infinite product measure on $\cal{X}^\infty$.
 For any prior $\Pi$ on $\Theta$, the joint probability law of the parameter and the data is given by
$$
P_\Pi(A \times B):= \int_A P_\theta^\infty(B)\,\mathrm{d}\Pi(\theta)\qquad\mbox{for all Borel sets $A\subseteq\Theta$ and $B\subseteq\cal{X}^\infty$.}
$$
We denote by $\Pi_n$ the posterior distribution of $\theta$, given $\Data$. Note that here we do not need to assume a dominated model $\{\Ptn:\,\theta\in\Theta\}$, as long as there exists a natural version of the posterior distribution.
We also consider the \vir{predictive} distribution of all future observations $X_{n+1},\,X_{n+2},\,\ldots\,$, conditionally on $\Data$, given by
$$
P_{\Pi_n}(\cdot):=\int_\Theta P_\theta^\infty(\cdot)\,\mathrm{d}\Pi_n(\theta).
$$
A posterior distribution $\Pi_n$ is {\em weakly consistent} at $\theta$ if, for any weak neighborhood $W$ of $\theta$, $\Pi_n(W^c) \rightarrow 0$ a.s.$\,[P_{\theta}^\infty]$, for all $\theta\in\Theta$.

Let us now consider the EB posterior $\Pi_{n,\,\hatl_n}$ described in subsection \ref{subsec:nota}.
The following result is a consequence of Theorem A.1 in \cite{DiaconisFreedman1986}.

\begin{prop} \label{prop:weakMerging}
Let the map $\theta \mapsto P_\theta$ be one-to-one and Borel. Given a family of priors
$\{\Pi(\cdot|\lambda):\,\lambda\in\Lambda\}$, let $\Pi_{n,\,\hatl_n}$ be the EB posterior.
The EB posterior is consistent at any $\theta\in\Theta$
if and only if, for any prior probability $q$ on $\Theta$,
the EB posterior and the Bayes posterior $q_n$ weakly merge, as $n \rightarrow \infty$, with $P_q$-probability $1$.

Moreover, if the map $\theta \mapsto  P_\theta$ is continuous and has continuous inverse,
then the EB posterior is consistent at any $\theta$ if and only if, for all priors $q$ on $\Theta$,
the predictive distributions $P_{\Pi_{n,\,\hatl_n}}$ and $P_{q_n}$ weakly merge, with $P_q$-probability $1$, that is,
$$
\forall\,f\in C_B(\mathcal{X}^\infty),\quad\abs{\int_{\mathcal{X}^\infty} f\,\mathrm{d}(P_{\Pi_{n,\, \hatl_n}}-P_{q_n})}\rightarrow0\qquad \mathrm{a.s.}\,[P_q].
$$
\end{prop}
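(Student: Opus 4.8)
The plan is to reduce everything to the characterisation of weak merging in Theorem A.1 of \cite{DiaconisFreedman1986}, exploiting one structural observation: although $\Pi_{n,\,\hatl_n}$ is not a genuine Bayes posterior, the Diaconis--Freedman argument never uses that \emph{both} sequences of measures come from a prior. The only place genuineness enters is through the companion sequence $q_n$, for which I would invoke Doob's consistency theorem: since $\theta\mapsto P_\theta$ is one-to-one and Borel, for every prior $q$ the posterior $q_n$ is weakly consistent at $\theta$ for $q$-almost every $\theta$, i.e. $q_n\Rightarrow\delta_\theta$, $P_\theta^\infty$-a.s., for $q$-a.e. $\theta$. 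Because $\Theta$ is Polish, the weak topology on the space of probability measures is metrisable, so these weak limits hold as single almost-sure events and one may test against a countable convergence-determining family, sidestepping the uncountable quantifier over $g\in C_B(\Theta)$.

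For the posterior statement I would treat the two implications separately. For the direct implication, assume the EB posterior is consistent at every $\theta$ and fix a prior $q$. Writing $P_q=\int P_\theta^\infty\,\mathrm{d}q(\theta)$, for $q$-a.e. $\theta$ both $\Pi_{n,\,\hatl_n}\Rightarrow\delta_\theta$ (by the consistency hypothesis) and $q_n\Rightarrow\delta_\theta$ (by Doob) hold $P_\theta^\infty$-a.s.; two sequences with a common weak limit merge weakly, so $\abs{\int g\,\mathrm{d}(\Pi_{n,\,\hatl_n}-q_n)}\to0$ for all $g\in C_B(\Theta)$, $P_q$-a.s. For the converse, I would use the freedom to take $q=\delta_{\theta_0}$ a point mass: then $P_q=P_{\theta_0}^\infty$ and the companion posterior is $q_n\equiv\delta_{\theta_0}$. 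The merging hypothesis applied to this $q$ reads $\int g\,\mathrm{d}\Pi_{n,\,\hatl_n}\to g(\theta_0)$ for all $g\in C_B(\Theta)$, $P_{\theta_0}^\infty$-a.s., which is exactly $\Pi_{n,\,\hatl_n}\Rightarrow\delta_{\theta_0}$, i.e. consistency at $\theta_0$. As $\theta_0$ is arbitrary, this gives consistency on all of $\Theta$. The point-mass choice is precisely what upgrades the $q$-almost-everywhere quantifier of Doob and Diaconis--Freedman to every $\theta_0\in\Theta$.

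For the predictive statement I would pass from measures on $\Theta$ to measures on $\mathcal{X}^\infty$ through the mixture map $S(\mu):=\int_\Theta P_\theta^\infty\,\mathrm{d}\mu(\theta)$, noting $P_{\Pi_n}=S(\Pi_n)$. For every $f\in C_B(\mathcal{X}^\infty)$ one has $\int f\,\mathrm{d}S(\mu)=\int_\Theta \phi_f\,\mathrm{d}\mu$ with $\phi_f(\theta):=\int f\,\mathrm{d}P_\theta^\infty$. When $\theta\mapsto P_\theta$ is continuous, $\theta\mapsto P_\theta^\infty$ is weakly continuous, so $\phi_f\in C_B(\Theta)$ and $S$ is weakly continuous; this already gives the direct implication, since $\Pi_{n,\,\hatl_n}\Rightarrow\delta_\theta$ and $q_n\Rightarrow\delta_\theta$ then force $P_{\Pi_{n,\,\hatl_n}}\Rightarrow P_\theta^\infty$ and $P_{q_n}\Rightarrow P_\theta^\infty$, hence predictive merging. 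For the converse, take again $q=\delta_{\theta_0}$, so $P_{q_n}\equiv P_{\theta_0}^\infty$; predictive merging then yields $P_{\Pi_{n,\,\hatl_n}}\Rightarrow P_{\theta_0}^\infty$, and I must recover $\Pi_{n,\,\hatl_n}\Rightarrow\delta_{\theta_0}$ from it. This is where the continuity of the inverse of $\theta\mapsto P_\theta$ is used: it makes $\theta\mapsto P_\theta^\infty$ a homeomorphism onto its image and, through the de Finetti correspondence (the mixing measure of an exchangeable law being the law of the weak limit of the empirical measures), it renders $S$ invertible with weakly continuous inverse, so that $S(\Pi_{n,\,\hatl_n})\Rightarrow S(\delta_{\theta_0})$ transfers back to $\Pi_{n,\,\hatl_n}\Rightarrow\delta_{\theta_0}$.

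The hard part will be exactly this last step, the continuity of the inverse mixture correspondence on the image of $S$: weak convergence of predictive (exchangeable) laws must be shown to entail weak convergence of the associated posteriors on $\Theta$. Establishing it cleanly requires either the empirical-measure argument sketched above combined with the continuous-inverse hypothesis, or a direct separation argument producing, for each neighbourhood $U$ of $\theta_0$, a function $f\in C_B(\mathcal{X}^\infty)$ whose functional $\phi_f$ separates $\theta_0$ from $U^c$; the delicate point there is obtaining a \emph{uniform} separation over $U^c$, which is what the homeomorphism hypothesis secures and without which the converse may fail. The remaining bookkeeping — metrisability of the weak topologies so the almost-sure statements survive the quantifier over test functions, and the standing assumption that $\Pi_{n,\,\hatl_n}$ is a bona fide sequence of probability measures — is routine.
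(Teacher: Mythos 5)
Your opening structural observation---that the Diaconis--Freedman equivalences never use that \emph{both} sequences are genuine posteriors, only that the companion $q_n$ is a Bayes posterior while the other sequence need only be consistent---\emph{is} the paper's entire proof: the authors simply remark that the proof of $(i)$--$(iv)$ in Theorem~A.1 of Diaconis and Freedman (1986) goes through with $\Pi_{n,\,\hatl_n}$ in place of the second posterior, and stop there. What you do differently is to reconstruct that argument rather than cite it. Your reconstruction of the posterior statement is complete, correct, and faithful to how the cited equivalences are actually proved: Doob's theorem (available since $\theta\mapsto P_\theta$ is one-to-one and Borel and the spaces are Polish) gives $q_n\Rightarrow\delta_\theta$ a.s.\ $[P_\theta^\infty]$ for $q$-a.e.\ $\theta$, the hypothesis gives the same for $\Pi_{n,\,\hatl_n}$ at \emph{every} $\theta$, a common weak limit forces weak merging, and disintegrating $P_q$ over $q$ finishes; the converse via the point-mass prior $q=\delta_{\theta_0}$, for which $q_n\equiv\delta_{\theta_0}$ and $P_q=P_{\theta_0}^\infty$, is precisely the device that upgrades \vir{$q$-a.e.}\ to \vir{every $\theta_0$}. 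Your direct implication for the predictive statement (continuity of $\theta\mapsto P_\theta$ makes $\theta\mapsto P_\theta^\infty$ weakly continuous, since weak convergence on the countable product $\mathcal{X}^\infty$ follows from finite-dimensional convergence plus coordinatewise tightness, so $\phi_f\in C_B(\Theta)$) is also correct.

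The one place you stop short of a proof is the step you flag yourself: in the predictive converse, deducing $\Pi_{n,\,\hatl_n}\Rightarrow\delta_{\theta_0}$ from $P_{\Pi_{n,\,\hatl_n}}\Rightarrow P_{\theta_0}^\infty$. In the paper this costs nothing because it is part of what the cited theorem delivers; in your self-contained version it is the crux, and you only sketch it. It does close, and more simply than an appeal to general bicontinuity of the de Finetti correspondence, because the limit is a point mass. Let $h(\theta):=P_\theta$ and $\nu_n:=\Pi_{n,\,\hatl_n}\circ h^{-1}$. For $g\in C_B(\mathcal{X})$, the functions $g(x_1)$ and $g(x_1)g(x_2)$ lie in $C_B(\mathcal{X}^\infty)$ and integrate under $P_{\Pi_{n,\,\hatl_n}}$ to $\int\pt{\int g\,\mathrm{d}p}\mathrm{d}\nu_n(p)$ and $\int\pt{\int g\,\mathrm{d}p}^2\mathrm{d}\nu_n(p)$, so predictive convergence gives $\int\pt{\int g\,\mathrm{d}p-\int g\,\mathrm{d}P_{\theta_0}}^2\mathrm{d}\nu_n(p)\rightarrow0$; Chebyshev applied along a countable convergence-determining class in $C_B(\mathcal{X})$ then yields $\nu_n\Rightarrow\delta_{P_{\theta_0}}$ on the a.s.\ event where merging holds for these countably many test functions. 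Now the continuous inverse enters exactly once: for open $U\ni\theta_0$, $h(U)=V\cap h(\Theta)$ for some open $V\ni P_{\theta_0}$, and by injectivity $\Pi_{n,\,\hatl_n}(U)=\nu_n(V)\rightarrow1$ by portmanteau. With this lemma written out, your argument is a complete, self-contained alternative to the paper's citation; without it, the proposal is a correct plan whose hardest step is borrowed, implicitly, from the very theorem the paper invokes.
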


\begin{proof}
It suffices to note that the proof for the equivalences $(i)$--$(iv)$ in Theorem~A.1
of \cite{DiaconisFreedman1986}, page~18, goes through to the present case: in fact, it is based on the properties of the Bayes posterior $q_n$, which hold true in this case, whereas, for the EB posterior $\Pi_{n,\,\hatl_n}$, only consistency is required.
\end{proof}

Proposition \ref{prop:weakMerging} shows that any Bayesian is sure that her/his estimate w.r.t. the quadratic loss of any continuous and bounded function $g$ will asymptotically agree with the EB estimate if and only if the EB posterior is consistent at any $\theta$. If so, in particular, a Bayesian with prior $\Pi(\cdot|\lambda)$ is sure that
$|\int g(\theta)\,\mathrm{d}\Pi(\theta|\hatl_n,\,\Data) - \int g(\theta)\,\mathrm{d}\Pi(\theta|\lambda,\,\Data)| \rightarrow 0$.

\subsection{Consistency of empirical Bayes posteriors} \label{sec:consistency}
Weak merging gives a Bayesian motivation for consistency as a \emph{conditio sine qua non} for inter-subjective agreement, as seen in Proposition \ref{prop:weakMerging}. Of course, consistency is a basic property in itself from a frequentist viewpoint. Therefore, it is of interest to study consistency of the EB posterior distribution in greater generality, beyond the case of i.i.d. (or exchangeable) observations.
Consistency of parametric and nonparametric Bayesian procedures is fairly well understood. However, sufficient conditions for consistency of a Bayesian posterior are not enough for consistency of the EB posterior, since, in the latter case, the EB prior is data-dependent through $\hatl_n$.
We give two results on consistency of EB posteriors that hold true for dependent sequences and cover both parametric and nonparametric cases.
To be more specific, let $(\Theta,\,d)$ be a semi-metric space. For any $\epsilon>0$, let $U_\epsilon\equiv U_\epsilon(\theta_0):=\{\theta\in\Theta:\,d(\theta,\theta_0)<\epsilon\}$ denote the
open ball centered at $\theta_0$ with radius $\epsilon$. Note that $d(\cdot,\,\cdot)$ is understood as a loss function and, depending on the model, it can be a distance directly on $\theta$, for instance $(\theta- \theta')^2$ if $\theta$ is real, or a (pseudo-)distance on the model $p_\theta^{(n)}$, for instance the Hellinger distance between densities in a density model or the $L_2$-norm between regression functions in a regression model.
We provide sufficient conditions so that, for any $\epsilon>0$, the posterior probability \[\Pi(U_\epsilon^c|\hatl_n,\,\Data)\rightarrow0 \qquad\textrm{a.s.}\,[P_0^\infty],\]
where $P_0^\infty$ denotes the probability measure of $(X_i)_{i\geq1}$, under $\theta_0$.

\subsubsection{Sufficient conditions for consistency of empirical Bayes posteriors}
Recall that the model $\Ptn$ is dominated by a $\sigma$-finite measure $\mu$ and has density
$\ptn$ w.r.t. $\mu$. For $\theta\in\Theta$, let \[R(\ptn):=\frac{\ptn}{\pto}(\Data)\]
denote the likelihood ratio. We shall use the following assumptions.
\begin{itemize}
\item[$(\textbf{A1})$]There exist constants $c_1,\,c_2>0$ such that, for any $\epsilon>0$,
  \[P_0^*\pt{\sup_{\theta\in U_\epsilon^c}R(\ptn)
  \geq e^{-c_1n\epsilon^2}}\leq c_2 (n\epsilon^2)^{-(1+t)}\]
  for some $t>0$, where $P_0^*$ denotes the outer measure.
\item[$(\textbf{A2})$]For each $\theta_0\in\Theta$, there exists
      $\lambda_0\in\Lambda$ such that, for any $\eta>0$,
      \[\Pi(B_{\mathrm{KL}}(\theta_0;\,\eta)|\lambda_0)>0,\]
      where, for $\mathrm{KL}_\infty(\theta_0;\,\theta):= - \lim_{n\rightarrow\infty}{n}^{-1}\log
      R(\ptn)$, the set $B_{\mathrm{KL}}(\theta_0;\,\eta):=\{\theta\in\Theta:\,
      \mathrm{KL}_\infty(\theta_0;\,\theta)<\eta\}$.
\end{itemize}

\medskip

\noindent
When compared to the assumptions usually considered for posterior consistency,
$(\textbf{A1})$ is quite strong; it is, however, a common assumption in the maximum likelihood estimation literature. It is verified in most parametric models, see, \emph{e.g.}, \cite{schervish95}, and also in nonparametric models.
For instance, \cite{wongShen1995} proved that, for \emph{i.i.d.} observations with density $f_\theta$,
if $U_\epsilon$ is the Hellinger open ball centered at $f_0:=f_{\theta_0}$ with radius $\epsilon$, that is, $U_\epsilon\equiv H_\epsilon(f_0):=\{f_\theta:\,h(f_\theta,\,f_0)<\epsilon\}$,
a \emph{sufficient} condition for $(\textbf{A1})$ to hold true is that there exist
constants $c_3,\,c_4>0$ such that, for each $\epsilon>0$,
\begin{equation}\label{cond:ws:entropy}
\int_{\epsilon^2/2^8}^{\sqrt{2}\epsilon}\sqrt{H_{[\,]}(u/c_3,\,\Theta,\,h)}\,\mathrm{d}u\leq c_4\sqrt{n}\epsilon^2\qquad\mbox{ for $n$ large enough,}
\end{equation}
where the function $H_{[\,]}(\cdot,\,\Theta,\,h)$ denotes the Hellinger
bracketing metric entropy of $\Theta$.

In this paper, $(\textbf{A1})$ is used to
handle the numerator of the ratio defining the EB posterior probability of any neighborhood $U_\epsilon$ in the following way. By the first Borel-Cantelli lemma, $(\textbf{A1})$ implies that
$\sup_{\theta\in U_\epsilon^c}R(\ptn)<e^{-c_1n\epsilon^2}$ for all large $n$, a.s.~$[P_0^\infty]$.
Thus,
\begin{equation}\label{rem:WS}
\hspace*{-0.5cm}\int_{U_\epsilon^c}R(\ptn)\,\mathrm{d}\Pi(\theta|\hatl_n)\leq \Pi(U_\epsilon^c|\hatl_n)
\sup_{\theta\in U_\epsilon^c}R(\ptn)\leq \sup_{\theta\in U_\epsilon^c}R(\ptn)<e^{-c_1n\epsilon^2}\quad\textrm{for all large }n,
\end{equation}
$P_0^\infty$-almost surely. Note that the bound in \eqref{rem:WS} is valid for \emph{any} type of estimator
$\hatl_n$.

Assumption $(\textbf{A2})$ is the usual \emph{Kullback-Leibler prior support condition}, herein required to hold true for some value $\lambda_0\in\Lambda$. It is a mild assumption considered in most results on posterior
consistency and has been shown to be satisfied for various models and families of priors. Note that the rather abstract definition of $\mathrm{KL}_\infty(\cdot;\,\cdot)$ is mainly considered to deal with the non-i.i.d case. In the i.i.d. case, $\mathrm{KL}_\infty(\theta_0;\,\theta)$ is simply the Kullback-Leibler divergence between the densities $p_{\theta_0}$ and $p_{\theta}$ (per observation).
In the present context, it is used when $\hatl_n$ is the \MMLE\,
to bound from below $\hatm(\Data)/\pto(\Data)$. For other types of estimator $\hatl_n$,
a variant of $(\textbf{A2})$ is considered, cf. conditions $(ii)$-$(iii)$ of Proposition~\ref{thm1}.

\medskip

As explained in subsection~\ref{subsec:nota}, there are infinitely many possible choices for
$\hatl_n$. We first study consistency of the EB posterior when
$\hatl_n$ is the \MMLE, which is a common EB approach, see, for instance, \cite{berger:85}, \cite{GeorgeandFoster2000}, \cite{scott:berger:10}, just to name but a few. We then consider the case
where $\hatl_n$ is any estimator for which some direct knowledge is available, another common EB approach, even without any explicit mention of it being an EB procedure, see \cite{McABJ2006} for an example of plug-in EB in a nonparametric setting.


\subsubsection{Case of the maximum marginal likelihood estimator}
Recall that $\hatl_n\in\argmax_{\lambda\in\Lambda}\, m(\Data|\lambda)$ and $\hatm(\Data)<\infty$.
We have the following result.

\begin{prop}\label{prop1}
Under assumptions $(\mathbf{A1})$ and $(\mathbf{A2})$,
for any $\epsilon>0$,
\[\Pi(U_\epsilon^c|\hat{\lambda}_n,\,\Data)\rightarrow0\qquad\mathrm{a.s.}\,[P_0^\infty].\]
\end{prop}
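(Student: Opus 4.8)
The plan is to express $\Pi(U_\epsilon^c|\hatl_n,\,\Data)$ as a ratio of likelihood-ratio integrals and to bound numerator and denominator separately, the single new ingredient being the optimality built into the \MMLE. Dividing through by $\pto(\Data)$,
\[\Pi(U_\epsilon^c|\hatl_n,\,\Data)=\frac{\int_{U_\epsilon^c}R(\ptn)\,\mathrm{d}\Pi(\theta|\hatl_n)}{\int_\Theta R(\ptn)\,\mathrm{d}\Pi(\theta|\hatl_n)}.\]
The numerator is already controlled by \eqref{rem:WS}: assumption $(\mathbf{A1})$ and the first Borel--Cantelli lemma give $\int_{U_\epsilon^c}R(\ptn)\,\mathrm{d}\Pi(\theta|\hatl_n)<e^{-c_1n\epsilon^2}$ for all large $n$, $P_0^\infty$-almost surely, and this upper bound holds for \emph{any} estimator $\hatl_n$.

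The heart of the argument is a lower bound on the denominator, and this is the only place where the \MMLE\ enters. Since $\int_\Theta R(\ptn)\,\mathrm{d}\Pi(\theta|\hatl_n)=m(\Data|\hatl_n)/\pto(\Data)$ and, by definition, $\hatl_n$ maximizes $\lambda\mapsto m(\Data|\lambda)$, I would use the monotonicity $m(\Data|\hatl_n)\geq m(\Data|\lambda_0)$, where $\lambda_0\in\Lambda$ is the value supplied by $(\mathbf{A2})$. This gives
\[\int_\Theta R(\ptn)\,\mathrm{d}\Pi(\theta|\hatl_n)\geq\int_\Theta R(\ptn)\,\mathrm{d}\Pi(\theta|\lambda_0),\]
which strips away the data dependence in the hyperparameter and reduces the task to the familiar lower bound on the Bayes evidence under the \emph{fixed} prior $\Pi(\cdot|\lambda_0)$.

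For that fixed-prior bound I would invoke the standard Schwartz-type argument driven by the Kullback--Leibler support condition. Fix $\eta>0$; restricting the integral to $B_{\mathrm{KL}}(\theta_0;\,\eta)$ and applying Fatou's lemma, note that for each $\theta$ in this set $-n^{-1}\log R(\ptn)\to\mathrm{KL}_\infty(\theta_0;\,\theta)<\eta$, so $\liminf_n e^{n\eta}R(\ptn)=+\infty$; since $\Pi(B_{\mathrm{KL}}(\theta_0;\,\eta)|\lambda_0)>0$ by $(\mathbf{A2})$, this forces $\int_\Theta R(\ptn)\,\mathrm{d}\Pi(\theta|\lambda_0)\geq e^{-n\eta}$ for all large $n$, $P_0^\infty$-almost surely. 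Because $(\mathbf{A2})$ provides the \emph{same} $\lambda_0$ for every $\eta>0$, I would finally choose $\eta=c_1\epsilon^2/2$ and combine the two bounds to obtain, for all large $n$ and almost surely,
\[\Pi(U_\epsilon^c|\hatl_n,\,\Data)\leq\frac{e^{-c_1n\epsilon^2}}{e^{-n\eta}}=e^{-nc_1\epsilon^2/2}\longrightarrow0.\]
I expect the main obstacle to be conceptual rather than computational: the decisive observation is that the maximization defining the \MMLE\ yields $m(\Data|\hatl_n)\geq m(\Data|\lambda_0)$ at no cost, decoupling the data-dependent prior from the consistency estimate. The only mildly delicate technical point is the Fatou step, needed because $-n^{-1}\log R(\ptn)\to\mathrm{KL}_\infty(\theta_0;\,\theta)$ holds pointwise in $\theta$ but not uniformly; everything else is routine.
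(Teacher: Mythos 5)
Your proof is correct and follows essentially the same route as the paper's: the numerator is handled via $(\mathbf{A1})$ and \eqref{rem:WS}, and the denominator via the \MMLE\ monotonicity $\hatm(\Data)\geq m(\Data|\lambda_0)$ combined with the Kullback--Leibler support condition $(\mathbf{A2})$, exactly as in the paper. The only difference is presentational: where the paper cites Lemma~10 of \cite{barron1988} for the bound $m(\Data|\lambda_0)/\pto(\Data)>e^{-n\eta}$, you reprove it by the standard Fatou argument, whose genuinely delicate point is not lack of uniformity in $\theta$ (Fatou needs none) but the fact that the almost-sure convergence of $-n^{-1}\log R(\ptn)$ holds on a $\theta$-dependent null set, so a Fubini step is needed to get, for $P_0^\infty$-almost every sample path, convergence for $\Pi(\cdot|\lambda_0)$-almost every $\theta$ before Fatou can be applied.
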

\begin{proof}
Fix $\epsilon>0$. Set $N_n:=\int_{U_\epsilon^c}R(\ptn)
\,\mathrm{d}\Pi(\theta|\hatl_n)$, under $(\mathbf{A1})$, by \eqref{rem:WS},
$N_n<e^{-c_1n\epsilon^2}$ for all large $n$,
$P_0^\infty$-almost surely. Let $D_n:=\int_{\Theta} R(\ptn)
\,\mathrm{d}\Pi(\theta|\hatl_n)$. By definition of $\hatm(\Data)$, with $P_0^\infty$-probability 1,
\[D_n=\frac{\hatm(\Data)}{\pto(\Data)}
\geq\frac{m(\Data|\,\lambda_0)}{\pto(\Data)}=:D_n(\lambda_0)\qquad\mbox{for all large }n,\]
where $\lambda_0$ is as required in $(\mathbf{A2})$.
Reasoning as in Lemma~10 of \cite{barron1988},
page~23, for any $\eta>0$, $D_n(\lambda_0)>e^{-n\eta}$ for all large $n$, $P_0^\infty$-almost surely.
Choosing $0<\eta<c_1\epsilon^2$, for $\delta:=(c_1\epsilon^2-\eta)>0$, we have
$\Pi(U_\epsilon^c|\hatl_n,\,\Data)=N_n/D_n\leq N_n/D_n(\lambda_0)<e^{-n\delta}$ for all large $n$,
$P_0^\infty$-almost surely. The assertion follows.
\end{proof}

\begin{rmk}
\emph{Although it seems intuitive that the usual Kullback-Leibler condition on the
positivity of the prior mass of Kullback-Leibler neighborhoods of $\Pto$, say $(\mathbf{A2})$, implies \emph{weak} consistency of the EB posterior, as it happens for the true posterior, it is, however, not the case and additional assumptions on the behavior of the likelihood ratio and/or on the prior need to be required, as illustrated in the following example. Consider \cite{Bahadur1958}'s example, see also \cite{LC98}, pages~445--447, and \cite{ghoshRamamoorthi2003}, pages 29--31. Let $\Theta=\N^*$. For each $\theta=k$, a density $p_\theta$ on $[0,\,1]$
is defined as follows. Let $a_0=1$ and define recursively $a_k$ by
$\int_{a_k}^{a_{k-1}}[h(x)-C]\,\mathrm{d}x =1-C$, where $0<C<1$ is a given constant
and $h(x)=e^{1/x^2}$. Since $\int_0^1e^{1/x^2}\,\mathrm{d}x=\infty$, the
$a_k$'s are uniquely determined and the sequence $a_k\rightarrow0$ as $k\rightarrow\infty$.
For $\theta\in\Theta$, define
\begin{eqnarray*}
p_\theta(x) = \left\{ \begin{array}{cl}
h(x), & \mbox{if} \quad a_\theta < x \leq a_{\theta-1},\\[5pt]
C, & \mbox{if} \quad x\in [0,1] \cap ( a_\theta,\, a_{\theta-1}]^c,\\[5pt]
0, & \mbox{otherwise.}
\end{array}
\right.
\end{eqnarray*}
Let $X_1,\,\ldots,\,X_n|\theta\overset{\textrm{i.i.d.}}{\sim}p_\theta$.
The MLE $\hat{\theta}_n$ exists and tends to $\infty$ in probability,
regardless of the true value $\theta_0=k_0$ of $\theta$. It is, therefore, inconsistent.
On the other hand, $\Theta$ being countable, by Doob's theorem,
any proper prior on $\Theta$ leads to a consistent posterior at \emph{all} $\theta\in\Theta$.
Consider a family of priors $\{\Pi(\cdot|\lambda):\,\lambda\in\Lambda\}$ such that, for each $\theta$, there exists $\lambda \in \bar{\Lambda}$ for which $\Pi(\cdot| \lambda) = \delta_{\theta}$. If $\lambda\in\partial\Lambda$, then $\Pi(\cdot| \lambda)$ is defined as the weak limit of any sequence $\Pi(\cdot|\lambda_j)$ for $\lambda_j \rightarrow \lambda$ (when it exists). It is always possible to construct such a family of priors.
For $\lambda = (m,\, \sigma)$, let
\begin{eqnarray*}
\Pi(1|\lambda) &:=& \Phi( (1/2 - m)/\sigma) - \Phi((-1/2 - m)/\sigma),\\
\Pi(\theta|\lambda) &:=& \Phi( (\theta-1/2 - m)/\sigma) - \Phi((\theta -3/2 - m)/\sigma)\\&&\hspace*{3cm} + \,\Phi( (-\theta-3/2 - m)/\sigma)-\Phi( (-\theta-1/2- m)/\sigma)\qquad\mbox{ for $\theta>1$},
\end{eqnarray*}
where $\Phi$ is the cumulative distribution function of a standard Gaussian random variable.
By taking $m=\theta-1$ and letting $\sigma\rightarrow0$, we have as a limit the Dirac mass at $\theta$ because
$\Pi(1|\lambda)=0$ and $\Pi(\theta|\lambda)\rightarrow1$.
Thus, for each $k_0 \in \N^*$, by taking $m=k_0-1$ and letting $\sigma\rightarrow0$, we have as a limit the Dirac mass at $k_0$. Then, the EB posterior is the Dirac mass at the MLE $\hat{\theta}_n$,
which is inconsistent. To see this, note that
$$\forall\,\lambda \in \Lambda,\quad m(\Data|\lambda) \leq \prod_{i=1}^np_{\hat{\theta}_n}(X_i) \qquad \mbox{ and } \qquad  \hatm(\Data)=m(\Data|(\hat{\theta}_n-1,\, 0))=\prod_{i=1}^np_{\hat{\theta}_n}(X_i).$$}
\end{rmk}

\begin{rmk}\label{rmk:rates}
\emph{Proposition~\ref{prop1} gives a result on consistency, however, replacing $\epsilon$ with $\bar{\epsilon}_n$ in $(\mathbf{A1})$ and with $\tilde{\epsilon}_n$ in the stronger version of $(\mathbf{A2})$ as found in
\cite{GvdV071}, $\epsilon_n:=(\bar{\epsilon}_n\vee \tilde{\epsilon}_n)$ is an upper bound on the contraction rate
for the EB posterior. On the other hand, $\epsilon_n$ is an upper bound also on the rate of convergence for the Bayes posterior $\Pi(\cdot|\lambda_0,\,\Data)$. Choosing the value $\lambda_0$ that leads to the best prior concentration rate $\tilde{\epsilon}_n$ results in the best posterior rate $\epsilon_n$, at least if $\bar{\epsilon}_n\leq \tilde{\epsilon}_n$, in which case, the EB procedure attains some kind of optimality. We will precise this kind of results in the parametric framework in Section 3.}
\end{rmk}

\subsubsection{Case of a convergent $\hatl_n$}\label{subsec:conv-lambda}
In some applications, $\hatl_n$  is chosen to be a convenient statistic, like
some moment estimator, so that the prior is centered at a plausible
value for the parameter. In such cases, $\hatl_n$ has often a known
asymptotic behavior,  which does not necessarily mean that the EB posterior should have a stable behaviour, even if the prior has. In the following proposition, we give sufficient conditions
for consistency of the EB posterior in such situations. Suppose that the parameter is split into
$\theta=(\tau,\,\zeta)$, where $\tau\in\mathrm{T}$ and $\zeta\in\mathrm{Z}$ and, given $\lambda \in \Lambda \subseteq \R^\ell$, $\tau \sim \tilde{\Pi}(\cdot|\lambda)$ while $\zeta \sim \tilde{\Pi}$. In other words, the hyperparameter $\lambda$ only influences the prior distribution of $\tau$.
The overall prior is $\Pi(\cdot|\lambda):=\tilde{\Pi}(\cdot|\lambda)\times\tilde{\Pi}(\cdot)$.
Let $\theta_0=(\tau_0,\,\zeta_0)$ be the true value of $\theta=(\tau,\,\zeta)$.

\begin{prop}\label{thm1}
Let $\tilde{\Pi}(\cdot|\hatl_n)$, $n=1,\,2,\,\ldots$,
and $\tilde{\Pi}(\cdot|\lambda_0)$ be probability measures on $\mathcal{B}(\mathrm{T})$.
Assume that $(\mathbf{A1})$ is satisfied and
\begin{itemize}
\item[$(i)$] $\tilde{\Pi}(\cdot|\hatl_n)
\Rightarrow\tilde{\Pi}(\cdot|\lambda_0)$ a.s.$\,[P_0^\infty]$,
\item[$(ii)$] for each $\eta>0$, there exists a set $K_\eta\subseteq B_\mathrm{KL}(\theta_0;\,\eta)$ such that $\Pi(K_\eta|\lambda_0)>0$,
\item[$(iii)$] defined, for each $x^{\infty}\in\mathcal{X}^\infty$ and any $\eta>0$, the set
\[E_{x^\infty}^{(\eta)}:=
\pg{(\tau,\,\zeta)\in K_\eta:\,
\frac{1}{n}\log\frac{\ptzo}{\ptnz}(\data)\nrightarrow\mathrm{KL}_\infty((\tau_0,\,\zeta_0);\,(\tau,\,\zeta))\quad\textrm{for some }\tau_n\rightarrow\tau},\]
$E_{x^\infty}^{(\eta)}\in\mathcal{B}(\mathrm{T})\otimes\mathcal{B}(\mathrm{Z})$ and, for $P_0^\infty$-almost every $x^\infty\in\mathcal{X}^\infty$,
\begin{equation}\label{eq:nullmeasu}
\Pi(E_{x^\infty}^{(\eta)}|\lambda_0)=0.
\end{equation}
\end{itemize}
Then, for any $\epsilon>0$,
\[\Pi(U_\epsilon^c|\hatl_n,\,\Data)\rightarrow0\qquad\mathrm{a.s.}\,[P_0^\infty].\]
\end{prop}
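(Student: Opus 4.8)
The plan is to follow the template of the proof of Proposition~\ref{prop1}, writing $\Pi(U_\epsilon^c|\hatl_n,\,\Data)=N_n/D_n$ with $N_n:=\int_{U_\epsilon^c}R(\ptn)\,\mathrm{d}\Pi(\theta|\hatl_n)$ and $D_n:=\int_\Theta R(\ptn)\,\mathrm{d}\Pi(\theta|\hatl_n)$, and to show that the numerator decays exponentially while the denominator does not. The numerator costs nothing new: since the bound \eqref{rem:WS} derived from $(\mathbf{A1})$ is valid for \emph{any} estimator $\hatl_n$, we get $N_n<e^{-c_1n\epsilon^2}$ for all large $n$, $P_0^\infty$-almost surely. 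Fixing once and for all $0<\eta<c_1\epsilon^2$, it then suffices to prove that $D_n>e^{-n\eta}$ for all large $n$, $P_0^\infty$-almost surely, because then $N_n/D_n<e^{-n(c_1\epsilon^2-\eta)}\to0$.

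The whole difficulty is the lower bound on the denominator, and this is where the hypothesis $\hatl_n=$ \MMLE\ of Proposition~\ref{prop1} is no longer available: here $\hatl_n$ is generic, so $D_n$ is an integral of the likelihood ratio against the \emph{data-dependent, moving} prior $\tilde{\Pi}(\cdot|\hatl_n)\times\tilde{\Pi}$, and one can no longer minorise it by a fixed-prior quantity $D_n(\lambda_0)$ and quote Barron's lemma. The device I would use to reconcile the moving prior with its weak limit is Skorokhod's representation theorem. Work on the $P_0^\infty$-full event on which $(i)$ holds, on which \eqref{rem:WS} holds, and on which \eqref{eq:nullmeasu} holds for the chosen $\eta$; fix such an $x^\infty$. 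Using $(i)$, on an auxiliary probability space $(\Omega',\,\mathcal{F}',\,\mathbb{P}')$ I would realise random elements $\tau_n\sim\tilde{\Pi}(\cdot|\hatl_n)$ and $\tau_\infty\sim\tilde{\Pi}(\cdot|\lambda_0)$ with $\tau_n\to\tau_\infty$ $\mathbb{P}'$-almost surely, together with an independent $\zeta\sim\tilde{\Pi}$ shared across all $n$. Because $\Pi(\cdot|\lambda)=\tilde{\Pi}(\cdot|\lambda)\times\tilde{\Pi}$, the pair $(\tau_n,\,\zeta)$ has law $\Pi(\cdot|\hatl_n)$, so that $D_n=\mathbb{E}'[R(\ptnz)]$ is a deterministic number (given the data) that this coupling is free to bound from below.

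The key step is then to isolate a subset of $\Omega'$ of positive $\mathbb{P}'$-probability on which the integrand does not decay. Let $G:=\{(\tau_\infty,\,\zeta)\in K_\eta\setminus E_{x^\infty}^{(\eta)}\}$. Since $(\tau_\infty,\,\zeta)\sim\tilde{\Pi}(\cdot|\lambda_0)\times\tilde{\Pi}=\Pi(\cdot|\lambda_0)$, assumptions $(ii)$ and $(iii)$ give $\mathbb{P}'(G)=\Pi(K_\eta|\lambda_0)-\Pi(E_{x^\infty}^{(\eta)}|\lambda_0)=\Pi(K_\eta|\lambda_0)>0$. On $G$ the limit point lies in $K_\eta\setminus E_{x^\infty}^{(\eta)}$, so by the very definition of $E_{x^\infty}^{(\eta)}$, applied to the sequence $\tau_n\to\tau_\infty$ furnished by Skorokhod (and recalling $\ptzo=\pto$),
\[\frac{1}{n}\log\frac{\ptzo}{\ptnz}(\data)\to\mathrm{KL}_\infty((\tau_0,\,\zeta_0);\,(\tau_\infty,\,\zeta))<\eta,\]
whence $\tfrac1n\log R(\ptnz)\to-\mathrm{KL}_\infty((\tau_0,\,\zeta_0);\,(\tau_\infty,\,\zeta))>-\eta$ and therefore $e^{n\eta}R(\ptnz)\to+\infty$ on $G$. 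By Fatou's lemma, $\liminf_n e^{n\eta}D_n\geq\mathbb{E}'[\liminf_n e^{n\eta}R(\ptnz)\mathbf{1}_G]=+\infty$, so $D_n>e^{-n\eta}$ for all large $n$, which closes the argument when combined with the numerator bound.

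I expect the denominator to be the only real obstacle: reconciling the \emph{weak} convergence of the moving prior with the \emph{exponential} scale of the likelihood ratio. Condition $(iii)$ is tailored exactly for this purpose — it is the ``uniform over perturbations $\tau_n\to\tau$'' strengthening of the Kullback--Leibler support condition $(\mathbf{A2})$ that survives the Skorokhod coupling. Two points of care deserve attention in the write-up. First, defining $G$ through the \emph{limit} $\tau_\infty$, rather than through membership of the moving $\tau_n$ in $K_\eta$, is what keeps the argument clean, since $(iii)$ already controls $R(\ptnz)$ along arbitrary sequences converging to a point of $K_\eta\setminus E_{x^\infty}^{(\eta)}$ with no need to verify that $\tau_n$ stays in $K_\eta$. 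Second, one must keep the two layers of randomness distinct: the statements ``for all large $n$'' hold $P_0^\infty$-almost surely in the data, whereas the coupling randomness $\mathbb{P}'$ is a mere computational device, legitimate because $D_n$ is deterministic given the data.
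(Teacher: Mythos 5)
Your proposal is correct and takes essentially the same route as the paper's own proof: the same $N_n/D_n$ decomposition with the numerator killed by $(\mathbf{A1})$ via \eqref{rem:WS}, and the denominator bounded below by exactly the same device of a Skorokhod-representation coupling of $\tilde{\Pi}(\cdot|\hatl_n)$ with $\tilde{\Pi}(\cdot|\lambda_0)$ (with $\zeta$ handled by Fubini), followed by Fatou's lemma on the positive-$\Pi(\cdot|\lambda_0)$-probability set where the limit point avoids $E_{x^\infty}^{(\eta)}$. The only cosmetic difference is that the paper works with $K_{\eta/2}$, whose good set $S_{\eta/2}^{(\omega)}$ is your $K_{\eta/2}\setminus E_{x^\infty}^{(\eta/2)}$, whereas you keep $K_\eta$ itself; both are valid since $\mathrm{KL}_\infty(\theta_0;\,\cdot)<\eta$ holds strictly on $K_\eta$, so the integrand still diverges pointwise.
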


\medskip

The proof of Proposition \ref{thm1} is postponed to the Appendix.

\medskip

\begin{rmk}\label{GC}
\emph{Condition $(i)$ is a natural condition in those cases where $\hatl_n$ is an explicit estimator (as opposed to the \MMLE) such as a moment type estimator, see Example 2, Gaussian DP mixture\,--\,II, in subsection~\ref{sec:examples}. Condition $(ii)$ is the usual Kullback-Leibler prior support condition, except for the fact that here it concerns the support of the limiting prior. Condition $(iii)$ is more unusual. If, in the definition of $E^{(\eta)}_{x^\infty}$, the $\tau_n$'s were fixed at $\tau$, then $(iii)$ would be a basic ergodic condition on the support of $\Pi(\cdot|\lambda_0)$, so the difficulty comes from obtaining an ergodic theorem uniformly over neighborhoods of $\tau$. In the case of \emph{i.i.d.} observations, the following condition implies $(iii)$. If
\[\quad\forall\,\eta,\,\epsilon>0,\,\,\,\forall\,\theta\in K_\eta,\,\,\,\exists\,
\delta\equiv\delta(\theta,\,\epsilon)>0:
\quad\mathbb{E}_0\pq{\sup_{\theta'\in \Theta:\,d(\theta',\,\theta)<\delta}
\abs{\log\dfrac{p_\theta}{p_{\theta'}}(X_1)}}<\dfrac{\epsilon}{2},
\]
then standard SLLN arguments imply that there exists a set $\mathcal{X}^\infty_0\subseteq\mathcal{X}^\infty$, with $P_0^\infty(\mathcal{X}^\infty_0)=1$, such that, for each $x^\infty\in\mathcal{X}^\infty_0$, condition
\eqref{eq:nullmeasu} is satisfied.}
\end{rmk}


\bigskip

We now present some examples illustrating the above consistency results.

\subsection{Examples}\label{sec:examples}
We begin by considering a  parametric example. Even if very simple, this example is illuminating since it illustrates the different types of behaviour of consistent EB posteriors to be expected when $\hatl_n$ is the \MMLE. These phenomena are studied in greater generality in Section~\ref{sec:frequentistMerging}, where it is shown that
the behavior of the EB posterior and the \MMLE\, is driven by the behavior of the map $\lambda\mapsto \pi(\theta_0|\lambda)$, where $\pi(\theta_0|\lambda)$ is the prior density, given $\lambda$, evaluated at
$\theta_0$.\\

\noindent
{\bf Example 1: Parametric case}\\[7pt]
Let $X_1,\,\ldots,\,X_n|\theta\sim \ptn$, $\theta\in\mathbb{R}$, with
$\{\ptn:\,\theta\in\mathbb{R}\}$ such that $(\mathbf{A1})$ holds true.
Let $\theta$ be given a Gaussian prior distribution, with mean $m$ and variance $\tau^2$,
$\theta\sim\textrm{N}(m,\,\tau^2)$. Consider the EB posterior with the \MMLE\, $\hatl_n$
in the following three cases:
\begin{itemize}
\item[$(1)$] $\tau^2$ is fixed and $\lambda=m$ is estimated,\\[-0.9cm]
\item[$(2)$] $m$ is fixed and $\lambda=\tau^2$ is estimated,\\[-0.9cm]
\item[$(3)$] $\lambda=(m,\,\tau^2)$ and both parameters are estimated.
\end{itemize}
Interestingly, the behavior of the EB posterior $\Pi(\cdot|\hatl_n,\,\Data)$ is quite different
in the three cases. As an illustration, we consider the simple case where $X_i|\theta \overset{\textrm{i.i.d.}}{\sim}\textrm{N}(\theta,\,\sigma^2)$, with $\sigma^2$ known,
which satisfies $(\mathbf{A1})$. Indeed, the choice of the sampling model is of little
consequence to the asymptotic behaviour of $\hatl_n$.

\begin{description}
\item[{\bf Case (1)}.]
The posterior distribution of $\theta$ corresponding to a fixed value $\tau^2$
is $\textrm{N}(m_n,\,(1/\tau^2 + n/\sigma^2)^{-1})$, where
\[m_n:=\frac{\sigma^2/n}{\sigma^2/n+\tau^2} \,\lambda+
\frac{\tau^2}{\tau^2 + \sigma^2/n} \,\bar{X}_n=\frac{\sigma^2/n}{\sigma^2/n+\tau^2} \, m+
\frac{\tau^2}{\tau^2 + \sigma^2/n} \, \bar{X}_n.\]
The EB posterior is obtained plugging the \MMLE, $\hatl_n=\bar{X}_n$, and it is $\textrm{N}(\bar{X}_n,\,(1/\tau^2 + n/\sigma^2)^{-1})$, which has a \emph{completely regular} density.
Both posteriors can be seen to be consistent by direct computations.

It is worth making a comparison with the \emph{hierarchical} Bayes, which typically assigns a normal hyperprior on $\lambda=m$, that is, $\lambda \sim \textrm{N}(\lambda_0,\,\tau^2_0)$ so that
$\Pi^h(\theta)=\textrm{N}(\theta|\lambda_0,\,\tau^2+\tau^2_0)$. Note that the hierarchical prior increases the uncertainty on $\theta$. The posterior is
$\textrm{N}(m_n^h,\,(1/(\tau^2+\tau_0^2)+n/\sigma^2)^{-1})$, where $m_n^h$ has the same expression as $m_n$, with $\tau^2$ replaced by $\tau^2+\tau_0^2$.
\end{description}
\begin{description}
\item[{\bf Case (2)}.] Recall that from \cite{LC98}, page 263, when $\lambda=\tau^2$, with $m$ fixed,
\[ \sigma^2+n\hat{\tau}^2=\max\{\sigma^2,\,n(\bar{X}_n-m)^2\},\,\,\,\,\,\,\,\,\mbox{so that}\,\,\,\,\,\,\,\, \hat{\tau}^2=\frac{\sigma^2}{n}\max\pg{\frac{n(\bar{X}_n-m)^2}{\sigma^2}-1,\,0}.\]
The EB posterior $\Pi(\cdot|\hat{\tau}^2,\,\Data)$ is normal with mean and variance having the same expressions as in Case (1), with $\tau^2$ replaced by $\hat{\tau}^2$.
A \emph{hierarchical} Bayes approach would assign a prior on $\tau^2$,
\emph{e.g.}, $1/\tau^{2} \sim \textrm{Gamma}(\nu/2,\, 2/\nu)$.
This leads to a Student's-$t$ prior distribution for $\theta$, with \vir{flatter} tails, that
may give better frequentist properties, see, for example, \cite{BergerRobert1990}, \cite{BergerStrawderman1996}. However, the Student's-$t$ prior is no longer conjugate and the EB posterior is simpler to compute.

In this example, the EB posterior is only \emph{partially regular}, in the sense that $\hat{\tau}^2$ can be equal to zero with positive probability so that
$\Pi(\cdot|\hat{\tau}^2,\,\Data)$ is degenerate at $m$, although, in the case where $m\neq\theta_0$, this probability converges to zero. This type of behavior is discussed more extensively in subsection~\ref{sec:gpriors}, where it is shown that, if $m\neq \theta_0$, the EB posterior merges strongly with any regular posterior, including the hierarchical posterior, with probability going to $1$, whereas, if $m=\theta_0$, there is a positive probability that the EB posterior does not merge strongly with any regular posterior.

\end{description}
\begin{description}
\item[{\bf Case (3)}.] In this case, the \MMLE\, for $\lambda=(m_0,\, \tau^2)$ is $\hatl_n=(\bar{X}_n,\, 0)$. The EB posterior is \emph{completely irregular} in the sense that it is always degenerate at $\bar{X}_n$. Note that this example is much more general than the Gaussian case and applies, in particular, to any location-scale family of priors. Indeed, if the model $\ptn$ admits a MLE $\hat{\theta}_n$ and $\pi(\cdot|\lambda)$ is of the form $\sigma^{-1}g((\cdot-\mu)/\sigma)$, with $\lambda=(\mu,\,\sigma)$, for some unimodal density $g$ which is maximum at $0$, then $\hatl_n=(\hat{\theta}_n,\,0)$ and the EB posterior is the point mass at $\hat{\theta}_n$. This shows that such families of priors should not be used in combination with \MMLE\, EB procedures.
\end{description}

\smallskip

Next, two nonparametric examples concerning Dirichlet process (DP)
location-scale and location mixtures of Gaussians are exhibited:
in the first one, the \MMLE\, for the precision parameter of the DP base measure
is considered, in the second one, a moment type estimator for the mean of
a normal base measure is employed.\\

\noindent{\bf Example 2: Nonparametric case}
\begin{description}
\item[{\bf Gaussian DP mixture\,--\,I}.]
Consider the following nonparametric model of Gaussian location-scale mixtures: the observations $X_i|G\overset{\textrm{i.i.d.}}{\sim} p_G(\cdot):=\int\phi(\cdot|\mu,\,\sigma^2)\,\mathrm{d}G(\mu,\,\sigma)$, where $\phi(\cdot|\mu,\,\sigma^2)$ denotes the density of a Gaussian random variable with mean $\mu$ and variance $\sigma^2$. In this example, $\theta=G $ belongs to the set $\Theta$ of all probability measures on $\R\times\R^{+*}$. We assume that $G \sim \mathrm{DP}(\alpha(\cdot|\lambda))$, where $\alpha(\cdot|\lambda)$ denotes a family of positive and finite measures on $\R\times\R^{+*}$. \cite{Liu1996} and \cite{McABJ2006} consider EB procedures in this type of models. In particular, \cite{Liu1996} considers the \MMLE\, for $\lambda=\alpha(\R\times\R^{+*})$, fixing the base probability measure $\bar{\alpha}(\cdot):=\alpha(\cdot)/\alpha(\R\times\R^{+*})$. Even if he considers a mixture of Binomial distributions, the argument remains valid for other types of Dirichlet process mixtures. For the sake of simplicity, we present our computations in the case of Gaussian mixtures. Following \cite{Liu1996}, see also \cite{PetroneRaftery97}, $\hatl_n$ is the solution of
\begin{equation}\label{eq:liu}
\sum_{j=1}^n \frac{\lambda}{\lambda+j-1}=\mathbb{E}[K_n|\lambda,\,\Data],
\end{equation}
where $\mathbb{E}[K_n|\lambda,\,\Data]$ is the expected number of occupied clusters under the conditional posterior distribution, given $\lambda$. If we assume that $\bar{\alpha}$ has support $A\times[\underline{\sigma},\,\bar{\sigma}]$, with $A$ a compact interval of $\R$, $0<\underline{\sigma}\leq\bar{\sigma}<\infty$ and $\Theta=\{G:\,\mathrm{supp}(G)\subseteq A\times[\underline{\sigma},\,\bar{\sigma}]\}$, then, from Theorem~3.2 of \cite{GvdV01}, page~1244, $\{p_G:\,G\in\Theta\}$ has bracketing Hellinger metric entropy satisfying condition \eqref{cond:ws:entropy}, so that assumption $(\mathbf{A1})$ is fulfilled. Moreover, if $p_{G_0}$ is a mixture of Gaussian distributions, $p_{G_0}(\cdot):=\int \phi(\cdot|\mu,\,\sigma^2)\,\mathrm{d}G_0(\mu,\,\sigma)$,
with $\mathrm{supp}(G_0)\subseteq A\times[\underline{\sigma},\,\bar{\sigma}]$, then also condition $(\mathbf{A2})$ is satisfied. The existence of a solution of \eqref{eq:liu} implies that the EB posterior is well defined, thus, using Proposition \ref{prop1}, we get consistency for the EB posterior.

\smallskip

\item{{\bf Gaussian DP mixture\,--\,II}.}
Consider the following model of Gaussian location mixtures:
$X_1,\,\ldots,\,X_n|(F,\,\sigma)\overset{\mathrm{i.i.d.}}{\sim}
p_{F,\,\sigma}(\cdot):=\int\phi(\cdot|\mu,\,\sigma^2)\,\mathrm{d}F(\mu)$.
In this case, $\theta=(F,\,\sigma)$ is assumed to take values in $\Theta:=\{(F,\,\sigma):\,F(\mathbb{R})=1,\,\,\sigma\in[\underline{\sigma},\,\bar{\sigma}]\}$, with
$0<\underline{\sigma}\leq\bar{\sigma}<\infty$.
Letting $\alpha(\cdot):=\alpha(\mathbb{R})\bar{\alpha}(\cdot)$,
with fixed precision $0<\alpha(\mathbb{R})<\infty$ and probability measure
$\bar{\alpha}$ specified, up to the mean, as $\textrm{N}(\mu_{\bar{\alpha}},\,\sigma^2_{\bar{\alpha}})$,
we assume that $F\sim\mathrm{DP}(\alpha)$ and $\sigma\sim H$, with $\mathrm{supp}(H)=[\underline{\sigma},\,\bar{\sigma}]$. In this case, $\lambda=\mu_{\bar{\alpha}}\in\R$, for which the estimator $\hatl_n:=\bar{X}_n$ is considered. Let $\bar{\alpha}_n(\cdot) := \mathrm{N}(\bar{X}_n,\,\sigma^2_{\bar{\alpha}})$ and the un-normalized EB base measure $\hat{\alpha}_n(\cdot):=\alpha(\mathbb{R})\bar{\alpha}_n(\cdot)=
\alpha(\mathbb{R})\mathrm{N}(\bar{X}_n,\,\sigma^2_{\bar{\alpha}})$ so that the EB prior on $(F,\,\sigma)$ is  $\mathrm{DP}(\hat{\alpha}_n)\times H$.
We prove consistency of the EB posterior w.r.t. the Hellinger distance $h$ or the $L_1$-distance.
Let $m_0:=\mathbb{E}_0[X_1]$ be the mean of $X_1$ under $p_{F_0,\,\sigma_0}$, with $\sigma_0\in[\underline{\sigma},\,\bar{\sigma}]$ and $F_0$ satisfying $F_0([-a,a]^c)\lesssim e^{-c_0 a^2 }$ for all large $a$ and a constant $c_0>0$. For fixed $\epsilon>0$, choose $0<\delta<\epsilon^2$ small enough and $a_n= n^{q}$, with $1/2< q < 1$. Consider the sieve set $\Theta_n:=\{(F,\,\sigma):\,F([-a_n,\,a_n])>1-\delta,\,\,\,\sigma\in[\underline{\sigma},\,\bar{\sigma}]\}$.
From Theorem 6 in \cite{GvdV072}, combined with the proof of Theorem 7 in \cite{GvdV072}, if $\tilde{\Theta}_n:=  \{(F,\,\sigma):\,F([-a_n,\,a_n])=1,\,\,\,\sigma\in[\underline{\sigma},\,\bar{\sigma}]\}$, for all $\epsilon^2 / 2^8 < u < \sqrt{2}\epsilon$,  $H_{[\,]}(u/c_3,\,\Theta_n,\,h)\lesssim a_n(\log a_n  + \log(1/\epsilon))^2$. Thus, for $n$ large enough,
$\int_{\epsilon/2^8}^{\sqrt{2}\epsilon}(H_{[\,]}(u/c_3,\,\Theta_n,\,h))^{1/2}\,\mathrm{d}u\lesssim\epsilon \sqrt{a_n}(\log a_n) <\epsilon^2\sqrt{n}$ because $a_n = n^q$ with $q < 1$.
By \eqref{rem:WS}, $P_0^\infty$-almost surely, $\int_{H_\epsilon^c\cap\Theta_n} R(p^{(n)}_{F,\,\sigma})\,\mathrm{d}\Pi(F,\,\sigma|\hatl_n)<e^{-c_1n\epsilon^2}$ for all large $n$.
We now show that the expected value of the integral over $\Theta_n^c$ tends to $0$.
In this case, $\lambda_0=m_0$. Define $\alpha_0(\cdot):=\alpha(\mathbb{R})\bar{\alpha}_0(\cdot)=\alpha(\mathbb{R})\mathrm{N}(m_0,\,\sigma^2_{\bar{\alpha}})$. Since $\bar{X}_n\overset{\mathrm{a.s.}}{\longrightarrow} m_0$, we have $\bar{\alpha}_n\Rightarrow\bar{\alpha}_0$, $\mathrm{a.s.}\,\,[P_0^\infty]$, whence, by Theorem~3.2.6 in \cite{ghoshRamamoorthi2003}, pages~105--106, $\mathrm{DP}(\hat{\alpha}_n)\Rightarrow\mathrm{DP}(\alpha_0)$, $\mathrm{a.s.}\,\,[P_0^\infty]$, and
condition $(i)$ of Proposition~\ref{thm1} is fulfilled. Denote by $\Pi(\cdot|\lambda_0)$ the overall limiting prior $\mathrm{DP}(\alpha_0)\times H$. Since $F\sim\mathrm{DP}(\hat{\alpha}_n)$, using the stick-breaking representation, we have $p_{F,\,\sigma}(\cdot)=\sum_{j=1}^\infty p_j\phi_\sigma(\cdot-\xi_j)$, with $\xi_j\sim \textrm{N}(\bar{X}_n,\,\sigma^2_{\bar{\alpha}})$. As $\phi_\sigma(\cdot-\xi_j)=
\phi_{\sigma}(\cdot-(\bar{X}_n-m_0)-\xi_j')$, with
$\xi'_j\sim \textrm{N}(m_0,\,\sigma^2_{\bar{\alpha}})$, we have $p_{F,\,\sigma}(\cdot)=
p_{F',\,\sigma}(\cdot-(\bar{X}_n-m_0))$, with $F'\sim \mathrm{DP}(\alpha_0)$.
Let $A_n$ be the set wherein the inequality $|\bar{X}_n-m_0|\leq L/\sqrt{n}$ holds true for some constant $L>0$. Note that $P_{\theta_0}^{(n)}(A_n^c)$ can be made as small as needed by choosing $L$ large enough. Using the above representation of the EB Dirichlet prior,
\[
\begin{split}
p_{F',\,\sigma}^{(n)}(\Data - (\bar{X}_n - m_0))  &\leq \prod_{i=1}^n \int_{\R} \phi_\sigma(X_i - \xi) e^{ \frac{ L|X_i - \xi| }{\sqrt{n} \sigma^2}}\,\mathrm{d}F'(\xi)\\
 &= c_{n,\, \sigma}^n \prod_{i=1}^n \int_\R g_{\sigma}(X_i  - \xi)\,\mathrm{d}F'(\xi),
\end{split}
\]
where $g_\sigma$ is the probability density proportional to $ \phi_\sigma(y) e^{L|y|/(\sqrt{n}\sigma^2)}$ and
$$c_{n,\,\sigma}:= \int_\R \phi_\sigma(y) e^{L|y|/(\sqrt{n}\sigma^2)}\,\mathrm{d}y \leq e^{L^2 /(2n\sigma^2)}\left( 1 + \frac{2L}{ \sigma\sqrt{n}}\right),$$
which implies that
$$  \mathbb{E}_0\pq{\mathbf{I}_{A_n}(\Data)\int_{\Theta_n^c} R(p^{(n)}_{F,\,\sigma})\,\mathrm{d}\Pi(F,\,\sigma|\hatl_n)}\lesssim \left( 1 +\frac{ 2L}{\underline{\sigma}\sqrt{n}} \right)^n \Pi(\Theta_n^c|\lambda_0)\lesssim e^{ -c_1 a_n^2 +  c_2 \sqrt{n}} \leq e^{-c_3a_n^2},  $$
for $n$ large enough, by definition of $a_n$.

Next, we bound from below the denominator of the ratio defining the EB posterior probability of the set $H_\epsilon^c$. Using similar computations to those above, on $A_n$,
\begin{equation*}
 \int_\Theta R(p^{(n)}_{F,\,\sigma})\,\mathrm{d}\Pi(F,\,\sigma|\hatl_n) \gtrsim e^{- \frac{ L^2 }{ 2\underline{\sigma}^2 } }c_{n,\,\sigma}^{n} \int_\Theta R(\tilde{g}^{(n)}_{F',\,\sigma})\,\mathrm{d}\Pi(F',\,\sigma|\lambda_0),
 \end{equation*}
with $\tilde{g}_{F',\,\sigma}(\cdot) = c_{n,\,\sigma}^{-1}\int_{\R} \phi_\sigma(\cdot-\xi)e^{-L\frac{|\cdot-\xi|}{\sqrt{n}\sigma^2}} \,\mathrm{d}F'(\xi)$, where, with abuse of notation, we still denote by $c_{n,\,\sigma}$ the normalizing constant, although it is not exactly the same as before. Using similar computations to those in the proof of (5.21) in \cite{GvdV01}, we obtain that, for any $\eta >0$,
 $\int_\Theta R(\tilde{g}^{(n)}_{F',\,\sigma})\,\mathrm{d}\Pi(F',\,\sigma|\lambda_0)\geq\exp{\{-n\eta\}} $, for $n$ large enough. Consistency of the EB posterior follows.
\end{description}


We have seen in Example 1 that, even in simple models, the EB posterior under the \MMLE\, may be degenerate, which, from a Bayesian perspective, is a pathological behaviour. In the following section, we explain why and when such behaviour is to be expected. The key factor is the choice of the family of priors $\{\Pi(\cdot|\lambda):\,\lambda\in \Lambda\}$. This has strong practical implications since it shows that, unless some kind of strong shrinkage is required, it is better avoid maximizing over scale hyperparameters.

\section{Frequentist strong merging and asymptotic behavior of $\hatl_n$}\label{sec:frequentistMerging}

For each $\lambda\in\Lambda\subseteq\mathbb{R}^{\ell}$, $\ell\in\mathbb{N}$,
let $\Pi(\cdot|\lambda)$ be a prior on $\Theta\subseteq\mathbb{R}^k$, $k\in\mathbb{N}$, with density $\pi(\cdot|\lambda)$ w.r.t. a common measure $\nu$. Before formally stating a general result which describes the asymptotic behaviour of EB posteriors, we present an informal argument to explain the heuristics behind it. Under usual regularity conditions on the model, the marginal distribution, given $\lambda$, can be thus approximated
$$ m(\Data| \lambda)  = \pi(\theta_0|\lambda) \frac{ p_{\hat{\theta}_n}^{(n)}(\Data)(2\pi)^{k/2} }{ n^{k/2}|I(\theta_0)|^{1/2}}(1 + o_p(1))$$
under $\Pto$. If we could interchange the maximization and the limit, we would have
 $$\argmax_{\lambda\in \Lambda}\, m(\Data|\lambda) = \argmax_{\lambda\in \Lambda}\, \pi(\theta_0|\lambda)+ o_p(1).$$
An interesting phenomenon occurs: assuming the above argument is correct, the \MMLE\, is asymptotically maximizing the family $\{\pi(\theta_0|\lambda):\, \lambda \in \Lambda\}$, where $\theta_0$ is the true value of the parameter. In other words, it selects the most interesting value of $\lambda$ in terms of the prior model. We call the set
of values of $\lambda$ maximizing $\pi(\theta_0|\lambda)$ the \textit{prior oracle set of hyperparameters} and denote it by $\Lambda_0$. In terms of (strong) merging, however, $\Lambda_0$ may correspond to unpleasant values, typically if
$$\sup_{\lambda \in \Lambda}\pi(\theta_0|\lambda)= \infty $$
and $\Pi(\cdot|\lambda_0)$ is the Dirac mass at $\theta_0$. Then, the EB posterior is degenerate.
This is what happens in Cases (2) and (3) of Example 1 or, more generally, when $\pi(\cdot|\lambda)$ is a location-scale family and $\lambda $ contains the scale parameter.
Obviously, in such cases, we cannot interchange the limit and the maximization. We now present these ideas more rigorously.

Let $d$ be a semi-metric on $\Theta$ and, for any $\epsilon>0$, let $U_\epsilon$ denote the
open ball centered at $\theta_0$ with radius $\epsilon$.
The map $g:\,\theta\mapsto\sup_{\lambda\in\Lambda}\,\pi(\theta|\lambda)$
from $\Theta$ to $\mathbb{R}^+$ induces a partition $\{\Theta_0,\,\Theta_0^c\}$ of $\Theta$, with
$\Theta_0:=\{\theta\in\Theta:\,g(\theta)<\infty\}$ and $\Theta_0^c:=\{\theta\in\Theta:\,g(\theta)=\infty\}$.
We refer to the case where $g(\theta_0)<\infty$ or, equivalently, $\theta_0\in\Theta_0$,
as the \emph{non-degenerate} case and to the complementary case as the \emph{degenerate} case. As illustrated in the above heuristic discussion and in Sections \ref{subsec:nondege} and \ref{subsec:dege} below, the use of this terminology is motivated by the fact that, in the former case, the EB posterior is regular, whereas, in the latter case, it tends to be \vir{too much concentrated} at $\theta_0$ to merge strongly with any regular Bayes posterior.


\subsection{Non-degenerate case} \label{subsec:nondege}
We give sufficient conditions for the EB posterior $\Pi(\cdot|\hatl_n,\,\Data)$,
where $\hatl_n$ is the \MMLE, to \emph{merge strongly} with \emph{any} posterior $\Pi(\cdot|\lambda,\,\Data)$, $\lambda\in\Lambda$, in the non-degenerate case. The possibility of having a degeneracy
as in Case (2) of Example 1 is ruled out by assuming $\theta_0\in\Theta_0$.
When $\theta_0\in\Theta_0$, we define the set $\Lambda_0:=\{\lambda_0\in\Lambda:\,\pi(\theta_0|\lambda_0)=g(\theta_0)\}$ and
the subset $\tilde{\Lambda}_0 \subseteq\Lambda_0$ of all those $\lambda_0\in\Lambda_0$ for which
the map $\theta\mapsto \pi(\theta|\lambda_0)$ is continuous at $\theta_0$ and, for any $\epsilon,\,\eta>0$, $\Pi(U_\epsilon\cap B_\mathrm{KL}(\theta_0;\,\eta)|\lambda_0)>0$.

\begin{thm}\label{th:strongmerg}
Suppose that $\theta_0\in \Theta_0$. Assume that $(\mathbf{A1})$ is satisfied and
\begin{itemize}
\item[$(i)$]the map $g:\,\theta\mapsto\sup_{\lambda\in\Lambda}\,\pi(\theta|\lambda)$
is positive and continuous at $\theta_0$,
\item[$(ii)$]$\tilde{\Lambda}_0 \neq\emptyset$,
\end{itemize}
then, for each $\lambda_0\in\tilde{\Lambda}_0$,
\begin{equation}\label{convergence}
\frac{\hatm(\Data)}{m(\Data|\lambda_0)}\rightarrow1 \qquad \mathrm{a.s.}\,[P_0^\infty].
\end{equation}

\medskip

\noindent If, in addition to $(i)$ and $(ii)$, the following assumption is satisfied
\begin{itemize}
\item[$(iii)$]$\tilde{\Lambda}_0=\Lambda_0$ is included in the interior of $\Lambda$ and, for any
$\delta>0$, there exist $\epsilon,\,\eta>0$ so that
\[\sup_{\theta\in U_{\epsilon}}\, \sup_{\lambda\in\Lambda:\,d(\lambda,\,\Lambda_0)>\delta}\frac{\pi(\theta|\lambda)}{g(\theta)}\leq1-\eta,\]
where $d(\lambda,\,\Lambda_0):=
\inf_{\lambda_0\in\Lambda_0}d(\lambda,\,\lambda_0)$,
\end{itemize}
then
\begin{equation}\label{eq:merging}
d(\hat{\lambda}_n,\,\Lambda_0)\rightarrow0\qquad \mbox{and} \qquad \|\pi(\cdot|\hatl_n,\,\Data)-\pi(\cdot|\lambda_0,\,\Data)\|_1\rightarrow0\qquad\mathrm{a.s.}\,[P_0^\infty].
\end{equation}
\end{thm}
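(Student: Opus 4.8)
The plan is to prove the theorem in two stages, corresponding to its two conclusions. For the first conclusion \eqref{convergence}, I would establish matching upper and lower bounds on $\hatm(\Data)/m(\Data|\lambda_0)$. The lower bound is immediate: since $\hatl_n$ maximizes the marginal likelihood and $\lambda_0\in\Lambda$, we have $\hatm(\Data)\geq m(\Data|\lambda_0)$, so the ratio is at least $1$. The work is in the upper bound, showing $\limsup_n \hatm(\Data)/m(\Data|\lambda_0)\leq 1$ almost surely. The idea is that, by assumption $(i)$ and the continuity of $\pi(\cdot|\lambda_0)$ at $\theta_0$ (which holds since $\lambda_0\in\tilde\Lambda_0$), both $g(\theta)$ and $\pi(\theta|\lambda_0)$ are close to the common value $g(\theta_0)=\pi(\theta_0|\lambda_0)$ on a small ball $U_\epsilon$. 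Inside $U_\epsilon$, the bound $\pi(\theta|\lambda)\leq g(\theta)\approx g(\theta_0)\approx\pi(\theta|\lambda_0)$ lets one compare the contribution of \emph{any} $\lambda$ to the corresponding contribution of $\lambda_0$, uniformly; outside $U_\epsilon$, assumption $(\mathbf{A1})$ together with \eqref{rem:WS} forces the likelihood ratio, hence the integral, to be exponentially negligible compared with the denominator $D_n(\lambda_0)=m(\Data|\lambda_0)/\pto(\Data)$, which is bounded below by $e^{-n\eta}$ using the Kullback-Leibler support built into $\tilde\Lambda_0$ and a Barron-type argument as in Proposition~\ref{prop1}.

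Concretely, I would write $m(\Data|\lambda)=\int_{U_\epsilon}\ptn\,\mathrm{d}\Pi(\theta|\lambda)+\int_{U_\epsilon^c}\ptn\,\mathrm{d}\Pi(\theta|\lambda)$, divide by $\pto(\Data)$, and bound the $U_\epsilon$ part of $m(\Data|\lambda)/\pto(\Data)$ by $\sup_{\theta\in U_\epsilon}\bigl(\pi(\theta|\lambda)/\pi(\theta|\lambda_0)\bigr)$ times the corresponding integral for $\lambda_0$, then use $\pi(\theta|\lambda)\leq g(\theta)$ and the two-sided control $(1-\gamma)g(\theta_0)\leq\pi(\theta|\lambda_0),\,g(\theta)\leq(1+\gamma)g(\theta_0)$ on $U_\epsilon$ to make this ratio at most $(1+\gamma)/(1-\gamma)$, uniformly in $\lambda$. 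The $U_\epsilon^c$ part is at most $e^{-c_1n\epsilon^2}$ by \eqref{rem:WS}, uniformly in $\lambda$, hence negligible relative to $e^{-n\eta}\le m(\Data|\lambda_0)/\pto(\Data)$ once $\eta<c_1\epsilon^2$. Taking the supremum over $\lambda$ and dividing by $m(\Data|\lambda_0)$ yields $\hatm(\Data)/m(\Data|\lambda_0)\leq (1+\gamma)/(1-\gamma)+o(1)$; letting $\gamma\to0$ (equivalently $\epsilon\to0$ along a sequence) gives the $\limsup\leq1$ bound, and \eqref{convergence} follows.

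For the second conclusion \eqref{eq:merging}, assumption $(iii)$ is the crucial extra ingredient for localizing $\hatl_n$. The plan is to argue by contradiction along the relevant events: if $d(\hatl_n,\Lambda_0)>\delta$ infinitely often, then splitting $m(\Data|\hatl_n)$ as above, the $U_\epsilon$ contribution is now penalized by the factor $1-\eta$ coming from the uniform bound $\pi(\theta|\lambda)/g(\theta)\leq 1-\eta$ on $U_\epsilon$ for $d(\lambda,\Lambda_0)>\delta$, while the $U_\epsilon^c$ contribution is again exponentially small; comparing with the lower bound $\hatm(\Data)\geq m(\Data|\lambda_0)$ and the already-established sharp asymptotics of $m(\Data|\lambda_0)$ yields $\hatm(\Data)\leq(1-\eta/2)\,\hatm(\Data)$ eventually, a contradiction. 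Hence $d(\hatl_n,\Lambda_0)\to0$ almost surely. The $L_1$-merging statement then follows by a continuity/compactness argument: since $\Lambda_0\subseteq\mathrm{int}(\Lambda)$ and $\hatl_n$ approaches $\Lambda_0$, and since for each $\lambda_0\in\Lambda_0$ both EB and Bayes posteriors are, by the non-degeneracy and Proposition~\ref{prop1}, consistent and asymptotically governed by the same likelihood ratio against $\pto$, the difference of the two posterior densities is controlled by the difference of the priors $\pi(\cdot|\hatl_n)-\pi(\cdot|\lambda_0)$ on the shrinking support neighborhood where the posterior mass concentrates, which vanishes by continuity of $\lambda\mapsto\pi(\cdot|\lambda)$ near $\Lambda_0$.

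The main obstacle I anticipate is the upper bound in the first stage: making the interchange of supremum and integral rigorous uniformly in $\lambda$, so that a single small ball $U_\epsilon$ works for \emph{all} $\lambda$ simultaneously. The bound $\pi(\theta|\lambda)\leq g(\theta)$ handles this on $U_\epsilon$ for free, which is precisely why introducing $g$ and the set $\Theta_0$ is the right device; the delicate point is ensuring the $U_\epsilon^c$ integrals are negligible uniformly in $\lambda$, but this is exactly the content of \eqref{rem:WS}, whose bound was observed to be valid for any estimator and, by the same reasoning, for any fixed $\lambda$. The passage from $d(\hatl_n,\Lambda_0)\to0$ to $L_1$-merging is then essentially a routine consequence of continuity together with the strong-merging mechanism, and I would treat it more briefly.
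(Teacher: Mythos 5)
Your first two stages match the paper's own proof. The lower bound $\hatm(\Data)\geq m(\Data|\lambda_0)$, the uniform-in-$\lambda$ upper bound via $\pi(\theta|\lambda)\leq g(\theta)$ on $U_\epsilon$ together with the continuity of $g$ and of $\pi(\cdot|\lambda_0)$ at $\theta_0$, the exponential bound on the $U_\epsilon^c$ contribution from \eqref{rem:WS} (valid for every fixed $\lambda$), and the Barron-type lower bound $e^{-an}$ on the localized integral using the Kullback--Leibler condition built into $\tilde{\Lambda}_0$ -- all of this is exactly the paper's argument for \eqref{convergence}. Likewise, your contradiction argument for $d(\hatl_n,\Lambda_0)\rightarrow0$ is the paper's direct argument (it shows $\sup_{\lambda:\,d(\lambda,\Lambda_0)>\delta} m(\Data|\lambda)<(1-\eta/4)\,m(\Data|\lambda_0)\leq(1-\eta/4)\,\hatm(\Data)$ eventually, hence $\hatl_n\in N_\delta$) rephrased.

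The gap is in the last step, which you dismiss as ``essentially a routine consequence of continuity.'' Your argument for $\|\pi(\cdot|\hatl_n,\Data)-\pi(\cdot|\lambda_0,\Data)\|_1\rightarrow0$ invokes ``continuity of $\lambda\mapsto\pi(\cdot|\lambda)$ near $\Lambda_0$,'' but no such hypothesis appears in the theorem: assumptions $(i)$--$(iii)$ give continuity in $\theta$ at $\theta_0$ (of $g$ and of $\pi(\cdot|\lambda_0)$) and a separation property, never continuity in the hyperparameter. Moreover, when $\Lambda_0$ is not a singleton, $d(\hatl_n,\Lambda_0)\rightarrow0$ does not localize $\hatl_n$ near any particular $\lambda_0$, yet the theorem asserts merging with the posterior of \emph{each} $\lambda_0\in\tilde{\Lambda}_0$; so any route through ``$\hatl_n$ close to $\lambda_0$ implies $\pi(\cdot|\hatl_n)$ close to $\pi(\cdot|\lambda_0)$'' cannot work even in principle. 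The paper's actual mechanism avoids $\lambda$-continuity entirely: after localizing both posteriors to $U_\epsilon$ (using consistency, which indeed follows from Proposition~\ref{prop1} as you note) and invoking \eqref{convergence}, it reduces the problem to bounding $\int_{U_\epsilon}\frac{\ptn(\Data)}{\hatm(\Data)}|\pi(\theta|\hatl_n)-\pi(\theta|\lambda_0)|\,\mathrm{d}\nu(\theta)$, and then exploits the \emph{one-sided} envelope bound $\pi(\theta|\hatl_n)\leq g(\theta)\leq(1+\delta/3)\,\pi(\theta|\lambda_0)$ on $U_\epsilon$ (which holds for any value of $\hatl_n\in\Lambda$, by definition of $g$ and continuity at $\theta_0$). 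On the set $D_\epsilon=\{\theta\in U_\epsilon:\,\pi(\theta|\hatl_n)\geq\pi(\theta|\lambda_0)\}$ this directly controls the difference; on $D_\epsilon^c$, where $\pi(\theta|\hatl_n)$ could a priori be much smaller than $\pi(\theta|\lambda_0)$, a mass-conservation argument -- both priors integrated against $\ptn$ over $U_\epsilon$ produce nearly the same total $\hatm(\Data)$, by EB-posterior consistency and \eqref{convergence} -- shows the integrated deficit is at most $(\epsilon+2\delta/3)\,\hatm(\Data)$. Converting the one-sided pointwise bound into two-sided $L_1$ control via this mass argument is the genuinely nontrivial idea your proposal is missing.
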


\medskip

The proof of  Theorem \ref{th:strongmerg} is presented in the Appendix.
Some remarks and comments are in order here.

\begin{rmk}
\emph{A key set of values of $\lambda$ for understanding the asymptotic behavior of the EB posterior is therefore the \emph{prior oracle set of hyperparameters}, $\Lambda_0$, which consists of all those values for which the prior mostly favors $\theta_0$. Equation~\eqref{eq:merging} shows that, $P_0^\infty$-almost surely, $\hatl_n$ \vir{converges} to $\Lambda_0$. This result implies that the EB procedure asymptotically selects the \emph{smartest} values of $\lambda$ for \vir{estimating} $\theta_0$. Furthermore, in this case the EB posterior does not degenerate.}
\end{rmk}

\begin{rmk}
\emph{Under the conditions of Theorem~\ref{th:strongmerg}, if $\pi$ is any prior density w.r.t. $\nu$ which is positive and continuous at $\theta_0$ and whose posterior is consistent at $\theta_0$, then the EB posterior merges strongly with the posterior corresponding to the prior $\Pi$. This is a direct consequence of Theorem~\ref{th:strongmerg} combined with Theorem 1.3.1 of \cite{ghoshRamamoorthi2003}, pages~18--20. In particular, the EB posterior merges strongly with any \emph{hierarchical} Bayes posterior associated with a prior $\Pi^h$, for $h$ a prior on $\Lambda$, provided the map $\theta\mapsto\pi^{h}(\theta):=\int_{\Lambda} \pi(\theta|\lambda)\,\mathrm{d}h(\lambda)$ is positive and continuous at $\theta_0$ and the posterior corresponding to $\Pi^{h}$ is consistent at $\theta_0$ in terms of the priors $\Pi(\cdot|\lambda)$, $\lambda \in \Lambda$.}
\end{rmk}

\subsection{Degenerate case and extension to the model choice framework} \label{subsec:dege}

Theorem~\ref{th:strongmerg} implies that, if $g(\theta_0)<\infty $, under smoothness assumptions on $\pi(\theta|\lambda)$ for $\theta $ in a neighborhood of $\theta_0$, the EB posterior will eventually be close to any Bayes posterior based on a smooth prior density. Example 1, Cases (2) and (3), suggests that this might not be the case when $g(\theta_0)=\infty $ and there exists a $\lambda_0 \in \bar{\Lambda}$ for which $\Pi(\cdot|\lambda_0)=\delta_{\theta_0}$. In Section \ref{app:pr:dege} of the Appendix, we show that, for such $\theta_0$, the $L_1$-distance between the EB posterior and the Bayes posterior associated with any $\lambda\in \Lambda$ is bounded from below on a set whose probability remains asymptotically strictly positive, so that no strong merging can take place. This \emph{critical} phenomenon cannot be improved by having greater smoothness in the likelihood. Indeed, consider usual regularity assumptions on the model, \emph{i.e.},  $(\mathbf{A1})$ is satisfied, for any $\epsilon>0$ and $\theta \in U_\epsilon$,
  $$ l_n(\theta)-l_n(\hat{\theta}_n) \in - \frac{ n(\theta - \hat{\theta}_n )' I(\theta_0) (\theta - \hat{\theta}_n) }{ 2 }  ( 1 \pm \epsilon),\qquad \hat{\theta}_n \mbox{ denoting the MLE,}$$
and $l_n(\hat{\theta}_n)-l_n(\theta_0)$ converges in distribution to a $\chi^2$-distribution with $k$ degrees of freedom. Also, assume that $\ptn$ is bounded as a function of $\theta$ for all $n$.
Then, if $\theta_0 \in \Theta_0^c$ and there exists $\lambda_0 \in \bar{\Lambda}$ such that $\Pi(\cdot |\lambda_0) = \delta_{\theta_0}$, the EB posterior cannot merge strongly with any posterior $\Pi(\cdot|\lambda,\,\Data)$, with $\lambda \in \Lambda$ such that the prior density $\pi(\cdot|\lambda)$ is positive and continuous at $\theta_0$. In particular, it cannot merge with any hierarchical posterior.

Interestingly, \cite{scott:berger:10} also encounter this phenomenon in their comparison between fully Bayes and EB approaches for variable selection in regression models. We believe this is due to the same reasons as described above, although it does not completely fit the setup we have described because we have restricted ourselves to priors that are absolutely continuous w.r.t. Lebesgue measure. However, this is not a crucial difference. We describe in an informal way the link between our explanation above and their findings. First, we briefly recall their setup. They consider a regression model
$$Y_i=\alpha+\bs{X}_i'\boldsymbol{\beta}+\epsilon_i, \qquad \epsilon_i \sim \textrm{N}(0,\,\phi^{-1}), \quad\bs{X}_i \in \R^m, \quad m >1,$$
and their aim is to select the best set of covariates among the $m$ candidates.
They consider the following hierarchy:
given a model indexed by the inclusion vector $\boldsymbol{\gamma}\in\{ 0,\, 1\}^m$,
where $\gamma_i=1$ means that the $i$th covariate belongs to the model $M_{\boldsymbol{\gamma}}$, consider a prior
$\pi_{\boldsymbol{\gamma}}(\boldsymbol{\theta})=\pi(\alpha,\,\phi)\pi_{k_{\boldsymbol{\gamma}}}(\boldsymbol{\beta})$, where $\pi_{k_{\boldsymbol{\gamma}}}(\boldsymbol{\beta})$ is absolutely continuous w.r.t. Lebesgue measure on $\R^{k_{\boldsymbol{\gamma}}}$ as a distribution on the coefficients included in the model $\boldsymbol{\gamma}$.
Then, they consider $\Pi(M_{\boldsymbol{\gamma}}|p)=p^{k_{\boldsymbol{\gamma}}}(1-p)^{m-k_{\boldsymbol{\gamma}}}$, for $p\in (0,\,1)$, and study the EB approach which consists in computing the \MMLE\, for $p$. The marginal likelihood is written as
$$ m( \bs{Y}|p)=\sum_{\boldsymbol{\gamma}} p^{k_{\boldsymbol{\gamma}}}(1-p)^{m-k_{\boldsymbol{\gamma}}} m_{{\boldsymbol{\gamma}}}(\bs{Y}). $$
Each model is regular so that, under $P_{\boldsymbol{\theta}_0}^{(n)}$, with ${\boldsymbol{\theta}}_0=(\alpha_0,\,\boldsymbol{\beta}_0,\,\phi_0)$,
$$\frac{m_{\boldsymbol{\gamma}}(\bs{Y})}{p^{(n)}_{\boldsymbol{\theta}_0}}\approx
\frac{c_{\boldsymbol{\gamma}}\pi(\alpha_0,\,\phi_0) \pi_{k_{\boldsymbol{\gamma}}}(\boldsymbol{\beta}_0)e^{l_n(\hat{\boldsymbol{\theta}}_{\boldsymbol{\gamma}}) - l_n(\boldsymbol{\theta}_0)}}{ n^{(k_{\boldsymbol{\gamma}}+2)/2} },  \qquad c_{\boldsymbol{\gamma}}= (2\pi)^{(k_{\boldsymbol{\gamma}}+2)/2} |I_{\boldsymbol{\gamma}}(\theta_0)|^{-1/2},$$
for all ${\boldsymbol{\gamma}}$ such that we cannot have $\gamma_j=0$ and $\beta_{0j}\neq 0$, otherwise, the marginal is exponentially small. Hence, if ${\boldsymbol{\beta}}_0 =\bs{0}$, the above marginals are maximized (in ${\boldsymbol{\gamma}}$) at ${\boldsymbol{\gamma}}=\bs{0}$ due to the Ockham's-razor effect of integration and, with probability going to $1$, the EB posterior distribution puts mass $1$ on $M_{\bs{0}}$. Generally speaking, if $\boldsymbol{\theta}_0$ belongs to a model with $k_0$ covariates, the EB posterior will concentrate on $\hat{p}=k_0/m$ with probability going to $1$. Again, this is an oracle value since, in terms of prior on the models, it is centered at the right number of covariates, however, if $\boldsymbol{\theta}_0$ is either in the null or in the largest model, it corresponds to a completely degenerate prior on the models.

This is not merely specific of the linear regression example, and, in a general model choice framework with competing models $M_j$, $j=1,\, \ldots, J$, when $\lambda$ corresponds to a hyperparameter on the distribution of models, it is not only the prior values $\pi(\theta_0|\lambda)$ which drive the behaviour of the EB, but rather the values
$$\sum_{j:\,\theta_0\in M_j}\frac{\pi_j(\theta_0|M_j) P(M_j|\lambda)}{n^{d_j/2}}$$
due to integration over the parameters in $M_j$. Thus, $\hatl_n$ asymptotically maximizes
$P(M^*|\lambda)$, where $M^*$ is the smallest model containing $\theta_0$.
Depending on the form of $P(M^*|\lambda)$, the EB \textit{prior} distribution can be degenerate or not.

In the following section, we describe more carefully regression with $g$-priors, which has been considered in the literature in combination with EB procedures.

\subsection{Example: Regression with $g$-priors}\label{sec:gpriors}
Consider the canonical Gaussian regression model $\bs{Y}=\bs{1}\alpha+\bs{X}\boldsymbol{\beta}+\boldsymbol{\varepsilon}$, with $\boldsymbol{\varepsilon}\sim\textrm{N}_n(\bs{0},\,\sigma^2\bs{I})$,
where $\bs{Y}=(Y_1,\,\ldots,\,Y_n)'$ is the response vector,
$\bs{1}$ denotes the vector of $1$'s of length $n$, $\alpha$ is the intercept,
$\bs{X}$ is the $n\times k$ \emph{fixed} design matrix, $\boldsymbol{\beta}=(\beta_1,\,\ldots,\,\beta_k)'$
is the $k$-dimensional vector of regression coefficients, $\sigma^2$ is the error variance and
$\bs{I}$ denotes the $n\times n$ identity matrix. Clearly, $\bs{Y}|\alpha,\,\boldsymbol{\beta},\,\sigma^2\sim \textrm{N}_n(\bs{1}\alpha+\bs{X}\boldsymbol{\beta},\,\sigma^2\bs{I})$.
Note that $Y_i|\alpha,\,\boldsymbol{\beta},\,\sigma^2\overset{\textrm{ind.}}{\sim}
\textrm{N}(\alpha+\bs{x}_i'\boldsymbol{\beta},\,\sigma^2)$, $i=1,\,\ldots,\,n$,
where $\bs{x}_i'$ is the $i$th row of $\bs{X}$: the $Y_i$'s are (conditionally) independent,
but \emph{not} identically distributed. Let $\tilde{\bs{X}}$ denote the design matrix whose columns have been re-centered so that $\bs{1}'\tilde{\bs{X}}=\bs{0}'$, in which case $\boldsymbol{\beta}$ can be estimated separately from $\alpha$ using OLS estimators. The following condition is assumed throughout:
for fixed $1\leq k<n$, the matrix $n^{-1}(\tilde{\bs{X}}'\tilde{\bs{X}})$ is positive definite and converges to a positive definite matrix $\bs{V}$.
We consider the following default prior specification for $\alpha,\,\boldsymbol{\beta},\,\sigma^2$:
\begin{equation}\label{eq:g-prior}
\pi(\alpha,\,\sigma^2)\propto\frac{1}{\sigma^2},\qquad\qquad
\boldsymbol{\beta}|\sigma^2\sim\textrm{N}_k(\bs{0},\,g
\sigma^2(\tilde{\bs{X}}'\tilde{\bs{X}})^{-1}),\quad \mbox{with }g>0,
\end{equation}
where the prior covariance matrix of $\boldsymbol{\beta}$ is a scalar multiple $g$ of the covariance matrix
$\sigma^2(\tilde{\bs{X}}'\tilde{\bs{X}})^{-1}$ of the OLS estimator $\hat{\boldsymbol{\beta}}$ of $\boldsymbol{\beta}$. The prior mean for $\boldsymbol{\beta}$ can, in principle, be any $\boldsymbol{\beta}_a\in\mathbb{R}^k$, nonetheless, we take $\boldsymbol{\beta}_a=\bs{0}$
because this choice helps keeping the presentation at a simple technical level, without any loss of generality for the purpose of this study. The prior in \eqref{eq:g-prior}, which is a modified version of the original \cite{Zellner1986}'s $g$-prior, is widely used in the variable selection literature, see, \emph{e.g.}, \cite{ClydeandGeorge2000}, \cite{GeorgeandFoster2000}, \cite{Liangetal2008}.
In this example, $\lambda=g$, therefore we consider the EB posterior of $\boldsymbol{\beta}$ with the \MMLE\, of $g$, which, from equation (9) in
\cite{Liangetal2008}, page~413, is known to be
\[\hat{g}_n:=\max\{F_n-1,\,0\},\qquad\quad\mbox{ where }\,F_n:=\frac{R^2/k}{(1-R^2)/(n-1-k)},\]
$R^2$ being the ordinary coefficient of determination. Suppose that $\bs{Y}$
is generated by the model with parameter values $\alpha_0,\,\boldsymbol{\beta}_0,\,\sigma^2_0$.
Let $(\Omega,\mathcal{F},\,\mathbb{P})$ denote the probability space wherein $\bs{Y}$ is defined.
It turns out that
\begin{equation}\label{eq:g-conv}
\left\{\begin{array}{ll}\varliminf_{n\rightarrow\infty}\mathbb{P}(\hat{g}_n=0)
=\varliminf_{n\rightarrow\infty}\mathbb{P}(F_n\leq1)\geq\gamma>0,  &\hbox{\textrm{ if } $\boldsymbol{\beta}_0=\bs{0}$,} \\[7pt]
\lim_{n\rightarrow\infty}\mathbb{P}(\hat{g}_n>0)=\lim_{n\rightarrow\infty}\mathbb{P}(F_n>1)=1, &\hbox{\textrm{ if } $\boldsymbol{\beta}_0\neq\bs{0}$.}
\end{array}
\right.
\end{equation}
Interestingly, when $\boldsymbol{\beta}_0=\bs{0}$, even if the prior guess on the value of $\boldsymbol{\beta}_0$ is correct, the probability that the \MMLE\, takes a value in the boundary (thus causing the EB posterior to be degenerate) does not asymptotically vanish. Conversely, when $\boldsymbol{\beta}_0\neq\bs{0}$, even if the prior guess on $\boldsymbol{\beta}_0$ is wrong,
the probability that the EB posterior is non-degenerate tends to $1$, as $n\rightarrow\infty$.
To prove \eqref{eq:g-conv}, let
\[\tilde{F}_n:=\frac{(\hat{\boldsymbol{\beta}}-\boldsymbol{\beta}_0)'
(\tilde{\bs{X}}'\tilde{\bs{X}})(\hat{\boldsymbol{\beta}}-\boldsymbol{\beta}_0)/k}{\textrm{SSE}/(n-1-k)}.\]
\begin{itemize}

\item[$\bullet$] If $\boldsymbol{\beta}_0=\bs{0}$, then $F_n\equiv\tilde{F}_n\overset{\mathrm{a.s.}}{\longrightarrow}\chi^2_k/k$, because $\mathrm{SSE}/(n-k-1)\overset{\mathrm{a.s.}}{\longrightarrow}\sigma_0^2$, and $\varliminf_{n\rightarrow\infty}\mathbb{P}(\hat{g}_n=0)\geq\mathbb{P}(\chi^2_k/k\leq1)=:\gamma>0$;

\item[$\bullet$] if $\boldsymbol{\beta}_0\neq\bs{0}$, from consistency of $\hat{\boldsymbol{\beta}}$, \emph{i.e.}, $\hat{\boldsymbol{\beta}}\overset{\mathrm{a.s.}}{\longrightarrow}\boldsymbol{\beta}_0$,
\[R_n:=\frac{n^{-1}[(\boldsymbol{\beta}_0-2\hat{\boldsymbol{\beta}})'(\tilde{\bs{X}}'
\tilde{\bs{X}})\boldsymbol{\beta}_0]/k
}{\textrm{SSE}/(n-1-k)}\overset{\mathrm{a.s.}}{\longrightarrow}-\,\frac{(
\boldsymbol{\beta}_0'\bs{V}\boldsymbol{\beta}_0)/k
}{\sigma_0^2}<0,\]
which implies that $1+nR_n\rightarrow-\infty$. Consequently,
\[\mathbb{P}(\hat{g}_n>0)=\mathbb{P}(F_n>1)=
\mathbb{P}\pt{\tilde{F}_n>1+\frac{[(\boldsymbol{\beta}_0-2\hat{\boldsymbol{\beta}})'(\tilde{\bs{X}}'
\tilde{\bs{X}})\boldsymbol{\beta}_0]/k
}{\textrm{SSE}/(n-1-k)}}=\mathbb{P}(\tilde{F}_n>1+nR_n)\rightarrow1.\]
\end{itemize}
We now study the consequences of \eqref{eq:g-conv} on \emph{frequentist merging in total variation}.
Some preliminary remarks are in order. For each $g>0$, the posterior
$\Pi(\cdot|g,\,\bs{Y})$ of $\boldsymbol{\beta}$ is absolutely continuous
w.r.t. Lebesgue measure on $\mathbb{R}^k$. Let $\pi(\cdot|g,\,\bs{Y})$ denote its density.
By direct computations, whatever $\boldsymbol{\beta}_0\in\mathbb{R}^k$, for each
$g>0$, $\Pi(\cdot|g,\,\bs{Y})\Rightarrow\delta_{\boldsymbol{\beta}_0}$, $\mbox{a.s.}\,[\mathbb{P}]$.
Defined the set $\Omega_n:=\{\hat{g}_n=0\}$, clearly, $\Omega_n\subseteq\{\Pi(\cdot|\hat{g}_n,\,
\bs{Y})=\delta_{\bs{0}}\}$. We have the following results.
\begin{itemize}

\item[$\bullet$] If $\boldsymbol{\beta}_0=\bs{0}$ then, for each $g>0$,
$\varliminf_{n\rightarrow\infty}\mathbb{P}
    (d_{\mathrm{TV}}(\Pi(\cdot|g,\,\bs{Y}),\,\Pi(\cdot|\hat{g}_n,\,
    \bs{Y}))=1)>0$, where $d_{\mathrm{TV}}$ denotes the total variation distance.
Therefore, even if the prior guess on $\boldsymbol{\beta}_0$ is correct,
there is a set of positive probability wherein \emph{strong merging cannot take place}.
In fact, on $\Omega_n$, for the Borel set $A=\{\bs{0}\}$, we have $\Pi(A|g,\,\bs{Y})=0$ because $A$ has null Lebesgue measure. Then, $1\geq d_{\mathrm{TV}}(\Pi(\cdot|g,\,\bs{Y}),\,\Pi(\cdot|\hat{g}_n,\,
\bs{Y}))\geq|\Pi(A|g,\,\bs{Y})-\delta_{\bs{0}}(A)|=1$.
Thus, $\varliminf_{n\rightarrow\infty}\mathbb{P}(\{d_{\mathrm{TV}}(\Pi(\cdot|g,\,\bs{Y}),\,\Pi(\cdot|\hat{g}_n,\,
\bs{Y}))=1\})\geq\varliminf_{n\rightarrow\infty}\mathbb{P}(\Omega_n)\geq\gamma>0$.

\item[$\bullet$] If $\boldsymbol{\beta}_0\neq\bs{0}$ then, for each $g>0$, $\mathbb{P}(\|\pi(\cdot|g,\,\bs{Y})-\pi(\cdot|\hat{g}_n,\,\bs{Y})\|_1\rightarrow0)\rightarrow1$,
where $\pi(\cdot|\hat{g}_n,\,\bs{Y})$ denotes Lebesgue density of the EB posterior. The result assures that, even if the prior guess on $\boldsymbol{\beta}_0$ is wrong, \emph{strong merging takes place} on a set with probability tending to $1$. This is a direct consequence of Theorem \ref{th:strongmerg}.
\end{itemize}


\section{Final remarks}\label{sec:final}

In this paper, we discussed whether the common knowledge that an
EB posterior is asymptotically equivalent to other Bayes procedures is correct.
We first discussed weak merging as a minimal requirement to motivate, from a Bayesian viewpoint, the use of EB procedures. Along the lines of \cite{DiaconisFreedman1986}'s results on consistency of Bayesian procedures,
we showed that merging is intimately related to consistency of the EB posterior and some general conditions on weak consistency are provided. Weak merging is, however, too weak a criterion to shed light on some pathological examples related to EB posteriors. Hence, we also studied strong merging in a more restricted framework, which has enabled us to characterize the families of priors which could lead to those \textit{degenerate} behaviours.

In Section~\ref{sec:frequentistMerging}, we showed that, at least for finite-dimensional parameter spaces, the EB procedure with the \MMLE\, asymptotically selects the {\em oracle} value of the hyperparameter, that is the value for which the prior mostly favors $\theta_0$. An open issue is whether this value also leads to optimal frequentist asymptotic properties of the EB posterior. In nonparametric problems, for instance, frequentist asymptotic properties of Bayes procedures may crucially depend on the fine details of the prior: in particular, the posterior may have sub-optimal or optimal contraction rate depending on the value of a hyperparameter $\lambda$ and there exists a value $\lambda^*$ entailing the minimax-optimal rate. An open question is whether the \emph{oracle} value $\lambda_0$ may be optimal in this sense.

A different problem related to the one herein investigated is that of maximum likelihood estimation and Bayesian inference for exchangeable data. The data would be {\em physically} exchangeable with probability law $P_\lambda$, rather than {\em physically} independent (i.i.d. according to $P_\theta$). Then, the EB approach would account for maximum likelihood estimation of $\lambda$, whereas Bayesian inference would put a prior on $\lambda$. In this case, there would be a {\em true} value $\lambda_0$ of $\lambda$ and frequentist asymptotic properties should be studied w.r.t. $P_{\lambda_0}^\infty$, rather than w.r.t. $P_0^\infty\equiv P_{\theta_0}^\infty$.


\bigskip
\noindent{\bf Acknowledgements}.
This work originated from a question by Persi Diaconis. We are grateful to him and Jim Berger for stimulating discussions.
S.P. has been partially supported by the Italian Ministry of University and Research, grant 2008MK3AFZ.

\bigskip

\section{Appendix}

\subsection{Proof of Proposition~\ref{thm1}}\label{app:pr:th1}

Fix $\epsilon>0$. Set $N_n:=\int_{U_\epsilon^c}R(\ptn)
\,\mathrm{d}\Pi(\theta|\hatl_n)$, under $(\mathbf{A1})$,
$N_n<e^{-c_1n\epsilon^2}$ for all large $n$,
$P_0^\infty$-almost surely. Let $D_n:=\int_{\Theta} R(\ptn)
\,\mathrm{d}\Pi(\theta|\hatl_n)$. In order to bound from below
$D_n$, it is convenient to refer to the probability space,
say $(\Omega,\,\mathcal{F},\,\mathbb{P})$, wherein the $X_i$'s are defined.
Let
\[
\mu_n^{(\omega)}(\cdot):=\tilde{\Pi}(\cdot|\hatl_n(\omega)),\,\,\,n=1,\,2,\,\ldots,\qquad\mbox{ and }\qquad
\mu_0(\cdot):=\tilde{\Pi}(\cdot|\lambda_0).
\]
Let $\Omega_0:=\{\omega\in\Omega:\,\mu_n^{(\omega)}\Rightarrow\mu_0\}$.
By assumption $(i)$, $\mathbb{P}(\Omega_0)=1$. For any $\omega\in\Omega_0$, by Skorohod's theorem
(cf. Theorem~1.8 in \cite{EthierKurtz1986}, pages~102--103),
there exists a probability space $(\Omega',\,\mathcal{F}',\,\rho)$ on which $\mathrm{T}$-valued
random elements $Y_n^{(\omega)}$, $n=1,\,2,\,\ldots$, and $Y_0$ are defined such that
$Y_n^{(\omega)}\sim\mu_n^{(\omega)}$, $n=1,\,2,\,\ldots$, $Y_0\sim\mu_0$ and
$d(Y_n^{(\omega)}(\omega'),\,Y_0(\omega'))\rightarrow0$ for $\rho$-almost every $\omega'\in\Omega'$. Let $\Omega_1:=\{\omega\in\Omega:\,(\ref{eq:nullmeasu})\textrm{ holds true}\}$.
Clearly, $\mathbb{P}(\Omega_0\cap\Omega_1)=1$. Fix $\omega\in(\Omega_0\cap\Omega_1)$.
For any $\eta>0$, let
\[
S_{\eta/2}^{(\omega)}:=
\pg{(\tau,\,\zeta)\in K_{\eta/2}:\,\lim_{n\rightarrow\infty}\frac{1}{n}\log\frac{\ptzo}
{\ptnz}(\Data(\omega))
=\textrm{KL}_\infty((\tau_0,\,\zeta_0);\,(\tau,\,\zeta))\quad \textrm{for all }\tau_n\rightarrow \tau}.
\]
By assumptions $(ii)$-$(iii)$, $\Pi(S_{\eta/2}^{(\omega)}|\lambda_0)>0$.
Defined the set $D_{\eta/2}^{(\omega)}:=\{(\omega',\,\zeta):\,(Y_0(\omega'),\,\zeta)\in S_{\eta/2}^{(\omega)}\}$,
\begin{equation}\label{eq:pos}
\int_{\mathrm{Z}}\int_{\Omega'}\mathbf{I}_{D_{\eta/2}^{(\omega)}}(\omega',\,\zeta)\,\mathrm{d}\rho(\omega')\,
\mathrm{d}\tilde{\Pi}(\zeta)=\Pi(S_{\eta/2}^{(\omega)}|\lambda_0)>0.
\end{equation}
By Fubini's theorem, a change of measure and Fatou's lemma,
\begin{eqnarray*}\varliminf_{n\rightarrow\infty}e^{n\eta}D_n
&\geq&\int_{\mathrm{Z}}\int_{\Omega'}\varliminf_{n\rightarrow\infty} \exp\pg{n\pq{\eta-\frac{1}{n}\log\frac{\ptzo}{\pyz}
(\Data(\omega))}}\,\mathrm{d}\rho(\omega')\,\mathrm{d}\tilde{\Pi}(\zeta)\\
&\geq&\int_{\mathrm{Z}}\int_{\Omega'}\mathbf{I}_{D_{\eta/2}^{(\omega)}}(\omega',\,\zeta)\\ &&\hspace*{0.8cm}\times\,\,\varliminf_{n\rightarrow\infty}\exp\pg{n\pq{\eta-\frac{1}{n}\log\frac{\ptzo}{\pyz}
(\Data(\omega))}}\,\mathrm{d}\rho(\omega')\,\mathrm{d}\tilde{\Pi}(\zeta)=\infty,
\end{eqnarray*}
because the integrand is equal to $\infty$ over a set of positive probability, see (\ref{eq:pos}).
Thus, $D_n>e^{-n\eta}$ for all large $n$, $P_0^\infty$-almost surely.
Choosing $0<\eta<c_1\epsilon^2$, for $\delta:=(c_1\epsilon^2-\eta)>0$, we have $\Pi(U_\epsilon^c|\hatl_n,\,\Data)=
N_n/D_n<e^{-n\delta}$ for all large $n$,
$P_0^\infty$-almost surely. The assertion follows.


\subsection{Proof of Theorem~\ref{th:strongmerg}} \label{app:th:strongmerg}

We begin by proving \eqref{convergence}. From $(ii)$, for each $\lambda_0\in\tilde \Lambda_0$, $P_0^\infty$-almost surely, $m(\Data|\lambda_0)>0$ for all large $n$. By definition of $\hatl_n$, $0<m(\Data|\lambda_0)\leq\hatm(\Data)<\infty$ for all large $n$,
whence
\begin{equation}\label{eq:disratio}
\frac{\hatm(\Data)}{m(\Data|\lambda_0)}\geq1\qquad\mbox{for all large $n$},
\end{equation}
$P_0^\infty$-almost surely. We prove the reverse inequality. Using $(\mathbf{A1})$, $(i)$ and $(ii)$,
for any $\delta>0$, there exists $\epsilon>0$ (depending on $\delta$, $\theta_0$ and $g(\theta_0)$) so that, with probability greater than or equal to $1-c_2(n\epsilon^2)^{-(1+t)}$,
\begin{equation*}
\begin{split}
\forall\,\lambda\in\Lambda,\quad\frac{m(\Data|\lambda)}{\pto(\Data)}&< e^{-c_1n\epsilon^2 }+\int_{U_\epsilon}R(\ptn)
\pi(\theta|\lambda)\,\mathrm{d}\nu(\theta)\\
& \leq e^{-c_1n\epsilon^2}+\int_{U_\epsilon}R(\ptn)g(\theta)\,\mathrm{d}\nu(\theta)\\
&< e^{-c_1n\epsilon^2}+(1+\delta/3)\int_{U_\epsilon}R(\ptn)
g(\theta_0)\,\mathrm{d}\nu(\theta)\\
&< e^{-c_1n\epsilon^2} + (1+2\delta/3)\int_{U_\epsilon}R(\ptn)
\pi(\theta|\lambda_0)\,\mathrm{d}\nu(\theta),
\end{split}
\end{equation*}
where the second inequality descends from the definition of $g$, because $\pi(\theta|\lambda)\leq g(\theta)$ for all $\theta \in U_\epsilon$, the third one from the positivity and continuity of $g$ at $\theta_0$ and the last one from the fact that $g(\theta_0)=\pi(\theta_0|\lambda_0)$, together with the continuity of $\pi(\theta|\lambda_0)$ at $\theta_0$. By the first Borel-Cantelli lemma, for any $\delta>0$, there exists $\epsilon>0$ so that
\[\forall\,\lambda\in\Lambda,\quad\frac{m(\Data|\lambda)}{\pto(\Data)}<e^{-c_1n\epsilon^2}+ (1+2\delta/3)\int_{U_\epsilon}R(\ptn)
\pi(\theta|\lambda_0)\,\mathrm{d}\nu(\theta)\qquad\mbox{for all large }n,\]
$P_0^\infty$-almost surely. The Kullback-Leibler condition on $\Pi(\cdot|\lambda_0)$ implies that, on a set of
$P_0^\infty$-probability $1$,
\begin{equation}\label{den}
\forall\,a>0,\quad\int_{U_\epsilon}R(\ptn)\pi(\theta|\lambda_0)\,\mathrm{d}\nu(\theta)>e^{-a n}\qquad\mbox{for all large $n$}.
\end{equation}
Therefore, for any $\delta>0$, on a set of $P_0^\infty$-probability $1$, for each
$\lambda\in\Lambda$, $m(\Data|\lambda)\leq (1+\delta)m(\Data|\lambda_0)$ for all large $n$,
which, combined with \eqref{eq:disratio}, proves \eqref{convergence}.

We now prove the convergence of $\hatl_n$. Recall that, by assumption $(\mathbf{A1})$,
for any $\epsilon>0$, on a set of $P_0^\infty$-probability $1$,
\[\forall\,\lambda\in\Lambda,\quad \frac{m(\Data|\lambda)}{\pto(\Data)}<e^{-c_1n\epsilon^2}+\int_{U_\epsilon}R(\ptn)
\pi(\theta|\lambda)\,\mathrm{d}\nu(\theta)\qquad\mbox{ for all large $n$}.\]
For $\delta>0$, define $N_\delta:=\{\lambda\in\Lambda:\,d(\lambda,\,\Lambda_0)\leq\delta\}$.
For any fixed $\delta>0$, by assumption $(iii)$, there exist $\epsilon_1,\,\eta>0$ so that,
on a set of $P_0^\infty $-probability $1$,
\[\sup_{\lambda\in N_{\delta}^c}\frac{m(\Data|\lambda)}{\pto(\Data)}<e^{-c_1n\epsilon_1^2}+(1-\eta)\int_{U_{\epsilon_1}}R(\ptn)
g(\theta)\,\mathrm{d}\nu(\theta)\qquad\mbox{ for all large $n$},\]
whence, using $(i)$ and $(ii)$ on the continuity of $g$ and $\pi(\cdot|\lambda_0)$, $\lambda_0\in \tilde\Lambda_0$, at $\theta_0$,
\[\sup_{\lambda\in N_{\delta}^c}\frac{m(\Data|\lambda)}{\pto(\Data)}<e^{-c_1n\epsilon_1^2}+ (1-\eta/2)\frac{m(\Data|\lambda_0)}{\pto(\Data)}\qquad\mbox{ for all large $n$}.
\]
Using \eqref{den}, we finally get that
$\sup_{\lambda\in N_{\delta}^c} m(\Data|\lambda )<(1-\eta/4)m(\Data|\lambda_0)$ for all large $n$,
$P_0^\infty$-almost surely. The fact that $\eta$ is fixed implies that, for $n$ large enough, $\hatl_n\in N_{\delta}$, a.s.$\,[P_0^\infty]$. Since $\Lambda_0 $ is included in the interior of $\Lambda$, with $P_0^\infty$-probability $1$, $\hatl_n$ belongs to the interior of $\Lambda$ and $\Pi(\cdot|\hatl_n)\ll\nu$ for all large $n$. This fact, combined with consistency of both the EB posterior and $\Pi(\cdot|\lambda_0,\,\Data)$,
and the convergence in \eqref{convergence}, yields that, $P_0^\infty$-almost surely, for any $\epsilon>0$,
\begin{equation*}
\begin{split}
\|\pi(\cdot|\hatl_n,\,\Data)-\pi(\cdot|\lambda_0,\,\Data)\|_1&\leq
\epsilon+\int_{U_\epsilon}\ptn(\Data) \abs{\frac{\pi(\theta|\hatl_n)}{\hatm(\Data)}-\frac{\pi(\theta|\lambda_0)}{m(\Data|\lambda_0)}} \mathrm{d}\nu(\theta)\\
&\leq\epsilon+\abs{\frac{\hatm(\Data)}{m(\Data|\lambda_0)}-1}\\&\qquad\quad+\int_{U_\epsilon} \frac{\ptn(\Data)}{\hatm(\Data)}|\pi(\theta|\hatl_n)-\pi(\theta|\lambda_0)|\,\mathrm{d}\nu(\theta)\\
&\leq 2\epsilon + \int_{U_\epsilon}\frac{\ptn(\Data)}{\hatm(\Data)}|\pi(\theta|\hatl_n)-\pi(\theta|\lambda_0)|\, \mathrm{d}\nu(\theta)
\end{split}
\end{equation*}
for $n$ large enough. We split $U_\epsilon$ into $D_\epsilon:=\{\theta\in U_\epsilon:\,\pi(\theta|\hatl_n)\geq\pi(\theta|\lambda_0)\}$ and
$D_\epsilon^c=\{\theta\in U_\epsilon:\,\pi(\theta|\hatl_n)<\pi(\theta|\lambda_0)\}$.
Since, for any $\delta>0$, if $\epsilon$ is small enough,
$\pi(\theta|\hatl_n)\leq\pi(\theta|\lambda_0)(1+\delta/3)$,
\begin{equation}\label{Despc}
\int_{D_\epsilon}\ptn(\Data)[\pi(\theta|\hatl_n)-\pi(\theta|\lambda_0)]\,\mathrm{d}\nu(\theta)\leq
\frac{\delta}{3}\int_{D_\epsilon}\ptn(\Data)\pi(\theta|\lambda_0)\,\mathrm{d}\nu(\theta)\leq\frac{\delta}{3} \hatm(\Data).
\end{equation}
From consistency of the EB posterior,
\[\int_{U_\epsilon}\ptn(\Data)\pi(\theta|\lambda_0)\,\mathrm{d}\nu(\theta)\leq\hatm(\Data)
<\int_{U_\epsilon}\ptn(\Data)\pi(\theta|\hatl_n)\,\mathrm{d}\nu(\theta)+(\epsilon+\delta/3)\hatm(\Data),\]
whence
\[\int_{D_\epsilon^c}\ptn(\Data)[\pi(\theta|\lambda_0)-\pi(\theta|\hatl_n)]\,\mathrm{d}\nu(\theta)
\leq\int_{D_\epsilon}\ptn(\Data)[\pi(\theta|\hatl_n)-\pi(\theta|\lambda_0)]\,\mathrm{d}\nu(\theta)+
(\epsilon+\delta/3)\hatm(\Data)\]
and, using \eqref{Despc}, \[\int_{D_\epsilon^c}\ptn(\Data)[\pi(\theta|\lambda_0)-\pi(\theta|\hatl_n)]\,\mathrm{d}\nu(\theta)\leq (\epsilon+2\delta/3)\hatm(\Data),\]
which implies that
\[\int_{U_\epsilon} \frac{\ptn(\Data)}{\hatm(\Data)}|\pi(\theta|\hatl_n)-\pi(\theta|\lambda_0)|\, \mathrm{d}\nu(\theta)\leq (\epsilon+\delta)\qquad\mbox{ for all large $n$}.\]
Thus, \eqref{eq:merging} is proved and the proof is complete.


\subsection{Proof for the non-merging of the posterior in Section \ref{subsec:dege}}\label{app:pr:dege}
We first recall the formal framework.
\begin{thm}\label{th:dege}
Suppose that $\theta_0 \in \Theta_0^c$. Assume that $(\mathbf{A1})$ is satisfied and
\begin{itemize}
\item[$(i)$]there exists $\lambda_0 \in \bar{\Lambda}_0$ such that $\Pi(\cdot |\lambda_0) = \delta_{\theta_0}$,
\item[$(ii)$]with $\Pto$-probability going to $1$, $\hatm(\Data)\geq \pto(\Data)$,
 \item[$(iii)$]the model admits a LAN expansion in the following form: for each $\epsilon>0$, there exists a set, with $\Pto$-probability going to $1$, wherein, uniformly in $\theta\in U_\epsilon$,
  $$ l_n(\theta)-l_n(\hat{\theta}_n) \in - \frac{ n(\theta - \hat{\theta}_n )' I(\theta_0) (\theta - \hat{\theta}_n) }{ 2 }  ( 1 \pm \epsilon),\qquad\mbox{ $\hat{\theta}_n$ denoting the MLE,}$$
\item[$(iv)$]$l_n(\hat{\theta}_n)-l_n(\theta_0)$ converges in distribution to a $\chi^2$-distribution with $k$ degrees of freedom.
\end{itemize}
Then, the EB posterior cannot merge strongly with any Bayes posterior $\Pi(\cdot|\lambda,\,\Data)$, with $\lambda \in \Lambda$ such that the prior density $\pi(\cdot|\lambda)$ is positive and continuous at $\theta_0$.
\end{thm}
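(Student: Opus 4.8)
The plan is to argue by contradiction: assume strong merging holds with some regular posterior $\Pi(\cdot|\lambda,\Data)$, $\pi(\cdot|\lambda)$ positive and continuous at $\theta_0$, and show this is incompatible with $(ii)$. First I would localize. The regular posterior is consistent and, under the LAN expansion $(iii)$, $(iv)$ together with positivity/continuity of $\pi(\cdot|\lambda)$ at $\theta_0$, it satisfies the Bernstein--von Mises theorem in $L_1$; so, writing $h:=\sqrt n(\theta-\hat\theta_n)$ and letting $\tilde f_n^{\,\mathrm{reg}}$ denote the Lebesgue density of $h$ under $\Pi(\cdot|\lambda,\Data)$ (assuming, as in the examples, that $\nu$ is Lebesgue near $\theta_0$), one has $\tilde f_n^{\,\mathrm{reg}}\to\varphi$ in $L_1$, where $\varphi$ is the $\mathrm N(0,I(\theta_0)^{-1})$ density. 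Since total variation is invariant under the bijection $\theta\mapsto h$, strong merging forces the EB posterior to concentrate on $U_\epsilon$ as well and gives $\|\tilde f_n^{\,\mathrm{EB}}-\tilde f_n^{\,\mathrm{reg}}\|_1\to0$, hence $\tilde f_n^{\,\mathrm{EB}}\to\varphi$ in $L_1$.

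The decisive step is to test $\tilde f_n^{\,\mathrm{EB}}$ against the \emph{inverse} Gaussian weight $w(h):=e^{\frac12 h'I(\theta_0)h}$ on a fixed ball $\{|h|\le R\}$. On the limit side $\varphi(h)w(h)$ is the \emph{constant} $|I(\theta_0)|^{1/2}/(2\pi)^{k/2}$, and since $w$ is bounded on $\{|h|\le R\}$ the $L_1$ convergence yields $\int_{|h|\le R}\tilde f_n^{\,\mathrm{EB}}(h)w(h)\,dh\to \frac{|I(\theta_0)|^{1/2}}{(2\pi)^{k/2}}\,\mathrm{vol}(\{|h|\le R\})$, which diverges like the volume $R^k$. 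On the EB side I write $\tilde f_n^{\,\mathrm{EB}}(h)=\frac{p^{(n)}_{\hat\theta_n}}{n^{k/2}\hatm(\Data)}\,e^{l_n(\theta)-l_n(\hat\theta_n)}\,\pi(\hat\theta_n+h/\sqrt n\,|\,\hatl_n)$; by $(iii)$ the likelihood factor $e^{l_n(\theta)-l_n(\hat\theta_n)}$ cancels $w$ up to $e^{\pm\frac\epsilon2\|I(\theta_0)\|R^2}$, so, undoing the rescaling with $B_R:=\{|\theta-\hat\theta_n|\le R/\sqrt n\}$, $\int_{|h|\le R}\tilde f_n^{\,\mathrm{EB}}(h)w(h)\,dh\le e^{\frac\epsilon2\|I(\theta_0)\|R^2}\,\frac{p^{(n)}_{\hat\theta_n}}{\hatm(\Data)}\,\Pi(B_R|\hatl_n)\le e^{\frac\epsilon2\|I(\theta_0)\|R^2}\,\frac{p^{(n)}_{\hat\theta_n}}{\hatm(\Data)}$, the crude bound $\Pi(B_R|\hatl_n)\le1$ being all I need.

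Combining the two sides, for every fixed $R$ and $\epsilon$ I get $\liminf_n \frac{p^{(n)}_{\hat\theta_n}}{\hatm(\Data)}\ge e^{-\frac\epsilon2\|I(\theta_0)\|R^2}\,\frac{|I(\theta_0)|^{1/2}}{(2\pi)^{k/2}}\,\mathrm{vol}(\{|h|\le R\})$; letting $\epsilon\downarrow0$ and then $R\uparrow\infty$ forces $p^{(n)}_{\hat\theta_n}/\hatm(\Data)\to\infty$. But $(ii)$ gives $\hatm(\Data)\ge\pto=p^{(n)}_{\hat\theta_n}e^{-(l_n(\hat\theta_n)-l_n(\theta_0))}$, so $p^{(n)}_{\hat\theta_n}/\hatm(\Data)\le e^{\,l_n(\hat\theta_n)-l_n(\theta_0)}$, which by $(iv)$ converges in distribution to $e^{\chi^2_k}$ and is therefore $O_{\mathbb P}(1)$, bounded on a set whose probability stays away from $0$. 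This contradicts the divergence, so no such merging can occur.

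The main obstacle is the total lack of control on the shape of the data-dependent EB prior $\pi(\cdot|\hatl_n)$, beyond $\Pi(\cdot|\hatl_n)\Rightarrow\delta_{\theta_0}$: a direct comparison of posterior masses on shrinking balls fails because the EB over-concentration and the Gaussian spread both live at the scale $n^{-k/2}$ with matching constants, so the crude mass bounds are only borderline. The inverse-Gaussian-weight device is precisely what bypasses this: it converts the qualitative statement ``the EB prior concentrates at the parametric rate'' into the single scalar inequality $\hatm(\Data)/p^{(n)}_{\hat\theta_n}$ bounded from below supplied by $(ii)$, with the unknown prior entering only through $\Pi(B_R|\hatl_n)\le1$. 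The remaining points to discharge are routine: the $L_1$ Bernstein--von Mises statement for the regular posterior (standard under $(\mathbf{A1})$, $(iii)$, $(iv)$ and prior regularity), the passage to the bulk $U_\epsilon$ (obtained \emph{from} merging, since the regular posterior puts mass tending to one there), and a general dominating measure $\nu$, which near $\theta_0$ contributes only a continuous positive factor that cancels in the ratio.
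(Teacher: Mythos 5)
Your proof is correct, but it takes a genuinely different route from the paper's. The paper argues directly: on the event $A_n\cap\Omega_{n,\,\delta}$ where $e^{l_n(\hat{\theta}_n)-l_n(\theta_0)}\le 1+\delta$ (of positive limiting probability by $(iv)$) and where the LAN/Laplace approximations hold, assumption $(ii)$ pins $\hatm(\Data)/\pto(\Data)$ into $[1,\,1+2\delta]$; it then compares the two posterior densities \emph{pointwise} after the rescaling $u=\sqrt{n}\,I(\theta_0)^{1/2}(\theta-\hat{\theta}_n)$, introducing the superlevel set $V_n=\{u:\,g_n(u)\ge 1-2\delta\}$ of the rescaled EB prior density. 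Since the EB prior has total mass at most one, $V_n$ has Lebesgue measure bounded uniformly in $n$, so on most of a fixed large ball the EB posterior density falls short of the approximately Gaussian regular posterior density by a factor of order $\delta$, which yields an explicit positive lower bound on the $L_1$-distance on an event of positive limiting probability. You instead argue by contradiction with a dual, test-function device: assuming merging, both rescaled posteriors must integrate the inverse-Gaussian weight $w(h)=e^{h'I(\theta_0)h/2}$ over $\{\|h\|\le R\}$ to nearly the same value; Bernstein--von Mises makes the regular side grow like $\mathrm{vol}(\{\|h\|\le R\})\sim R^k$, while the mass-one constraint caps the EB side at $e^{\epsilon\|I(\theta_0)\|R^2/2}\,p^{(n)}_{\hat{\theta}_n}(\Data)/\hatm(\Data)$, forcing $p^{(n)}_{\hat{\theta}_n}(\Data)/\hatm(\Data)\rightarrow\infty$ in probability, which contradicts the boundedness of this ratio on non-vanishing-probability events delivered by $(ii)$ and $(iv)$. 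Both proofs rest on the same three pillars (Laplace/BvM for the regular posterior, the mass-one constraint on the EB prior, and $(ii)$ plus $(iv)$ keeping $\hatm(\Data)$ within a constant factor of $p^{(n)}_{\hat{\theta}_n}(\Data)$ with positive probability), but they convert the mass-one constraint into non-merging differently: a superlevel-set volume bound in the paper versus your integrated pairing. The paper's route buys a quantitative conclusion---the $L_1$-distance exceeds an explicit constant on identified events---whereas yours is shorter, avoids the $V_n$ bookkeeping, and makes transparent where each hypothesis enters, at the cost of being purely qualitative.

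Two points to tighten, neither fatal. First, as written you presuppose that $\Pi(\cdot|\hatl_n)$ has a Lebesgue density; since $\hatl_n$ may lie on $\partial\Lambda$ (in the motivating examples $\Pi(\cdot|\hatl_n)$ is exactly a point mass), you should either dispatch the non-absolutely-continuous case separately, as the paper does in one sentence, or---better---phrase your key bound against the EB posterior \emph{measure}, $\int_{\{\|h\|\le R\}}w\,\mathrm{d}\tilde{\Pi}_n^{\mathrm{EB}}\le e^{\epsilon\|I(\theta_0)\|R^2/2}\,p^{(n)}_{\hat{\theta}_n}(\Data)\,\Pi(B_R|\hatl_n)/\hatm(\Data)$, which holds verbatim with no density required. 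Second, condition $(iii)$ only gives the Gaussian approximation of the regular posterior up to factors $e^{\pm\epsilon h'I(\theta_0)h/2}$ at fixed $\epsilon$, so the $L_1$ Bernstein--von Mises statement you invoke is strictly an $\epsilon$-approximate one; since your final step already sends $\epsilon\downarrow 0$ before $R\uparrow\infty$, this approximate version suffices, and it is at the same level of rigor as the ``usual Laplace expansion'' the paper itself invokes.
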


\begin{proof}
Define, for any $\delta>0$, the set $\Omega_{n,\,\delta}$ of $\data$'s such that
$e^{l_n(\hat{\theta}_n)-l_n(\theta_0)}\leq 1+\delta$. From assumption $(iv)$, for every $\delta>0$,
$\varliminf_{n\rightarrow\infty}\Pto(\Omega_{n,\,\delta})>0$. From assumption $(ii)$, $\hatm(\Data)/\pto(\Data)\geq 1$. We now study the reverse inequality. Using $(\mathbf{A1})$, for any $\epsilon>0$, on a set $A_n$ with $\Pto$-probability going to $1$,
\[
 \frac{\hatm(\Data)}{\pto(\Data)}=\int_{U_\epsilon}e^{l_n(\theta)-l_n(\theta_0)}\,\mathrm{d}\Pi(\theta| \hatl_n)+O(e^{-n \delta}).
\]
Moreover, using the LAN condition $(iii)$, for every $\theta\in U_\epsilon$,
$$l_n(\theta)-l_n(\theta_0)=l_n(\hat{\theta}_n)-l_n(\theta_0)+\frac{-n(\theta-\hat{\theta}_n)'I(\theta_0)
(\theta-\hat{\theta}_n)}{2}(1+o_p(1)),$$
so that, if $M_n:=M \sqrt{(\log n)/n}$, with $M>0$, on a set of $\Pto$-probability going to $1$,
\[
\int_{\|\theta-\hat{\theta}_n\| > M_n} e^{l_n(\theta)-l_n(\hat{\theta}_n)}\,\mathrm{d}\Pi(\theta|\hatl_n)
= O(n^{-H})\qquad\mbox{ for all $H>0$,}
\]
provided $M$ is large enough. This leads to
\[
\frac{\hatm(\Data)}{\pto(\Data)}=e^{l_n(\hat{\theta}_n)-l_n(\theta_0)}
\int_{U_{M_n}}e^{-n(\theta-\hat{\theta}_n)'I(\theta_0)(\theta-\hat{\theta}_n) /2 }\,\mathrm{d}\Pi(\theta|\hatl_n)+ O(n^{-H}),
\]
where $U_{M_n}:=\{\theta:\,\|\theta-\hat{\theta}_n\|\leq M_n\}$. With abuse of notation, we still denote by $A_n$ the set having $\Pto$-probability going to $1$ wherein the above computations are valid, so that, on $A_n \cap \Omega_{n,\,\delta}$,
\[
\frac{\hatm(\Data)}{\pto(\Data)}\leq 1+2\delta \qquad \mbox{for } n \mbox{ large enough.}
\]
Let $\lambda \in \Lambda$ be such that the prior density $\pi(\cdot|\lambda)$
is positive and continuous at $\theta_0$. Under assumptions $(iii)$ and $(\mathbf{A1})$, usual Laplace expansion of the marginal distribution of $\Data$ yields
\[
\frac{m(\Data|\lambda)}{\pto(\Data)}=\frac{\pi(\theta_0|\lambda)
e^{l_n(\hat{\theta}_n)-l_n(\theta_0)}(2\pi)^{k/2}}{n^{k/2}|I(\theta_0)|^{1/2}}(1+o_p(1)),
\]
so that $m(\Data|\lambda)/\hatm(\Data)=o_p(1)$. We now study the $L_1$-distance between the two posteriors.
If $\Pi(\cdot|\hatl_n)$ is degenerate (\emph{i.e.}, it is not absolutely continuous w.r.t. Lebesgue measure, which plays here the role of $\nu$), then the $L_1$-distance between the EB posterior and the posterior corresponding to $\Pi(\cdot|\lambda)$ is $1$. Thus, we only need to consider the case where $\Pi(\cdot|\hatl_n)$ is absolutely continuous w.r.t. Lebesgue measure. On a set of $\Pto$-probability going to $1$, which we still denote by $A_n$, intersected with $\Omega_{n,\,\delta}$, for each $\theta\in U_{M_n}$,
\begin{equation*}
\begin{split}
\pi(\theta|\hatl_n,\,\Data)-\pi(\theta|\lambda,\,\Data)&=
e^{l_n(\theta)-l_n(\hat{\theta}_n)} \pq{e^{l_n(\hat{\theta}_n)-l_n(\theta_0)}\pi(\theta|\hatl_n)-\frac{ n^{k/2}|I(\theta_0)|^{1/2}}{(2\pi)^{k/2}}+o_p(1)}\\
&= e^{-n(\theta-\hat{\theta}_n)'I(\theta_0)(\theta-\hat{\theta}_n)/2}\frac{n^{k/2}|I(\theta_0)|^{1/2}}{ (2\pi)^{k/2}}(1+o_p(1))\\
&\qquad\qquad\qquad\qquad\times\pq{e^{l_n(\hat{\theta}_n)-l_n(\theta_0)}\pi(\theta|\hatl_n)\frac{(2\pi)^{k/2}}{n^{k/2}|I(\theta_0)|^{1/2}}-1}.
 \end{split}
\end{equation*}
Set $u:=\sqrt{n}I(\theta_0)^{1/2}( \theta - \hat{\theta}_n)$ and define $V_n:=\{u:\,g_n(u)\geq 1-2\delta\}$, where
$$g_n(u):=\pi(\hat{\theta}_n+I(\theta_0)^{-1/2}u/\sqrt{n}|\hatl_n)\frac{(2\pi)^{k/2}}{n^{k/2}|I(\theta_0)|^{1/2}}.$$
To simplify the notation, we also denote by $V_n:=\{\theta=\hat{\theta}_n+I(\theta_0)^{-1/2}u/\sqrt{n}:\,u \in V_n\}$.
Then, for all $c>0$,
\[
\int_{V_n\cap\{\|u\|\leq cM_n\sqrt{n}\}}g_n(u)\,\mathrm{d}u=(2\pi)^{k/2}\int_{V_n\cap\{\|\theta-\hat{\theta}_n\|\leq c M_n\sqrt{n}\}}\pi(\theta|\hatl_n)\,\mathrm{d}\theta\leq (2\pi)^{k/2}
\]
and, by definition of $V_n$,
\[\int_{V_n\cap\{\|u\|\leq cM_n\sqrt{n}\}} g_n(u)\,\mathrm{d}u\geq(1-2\delta)\int_{V_n\cap\{\|u\|\leq cM_n\sqrt{n}\}} \,\mathrm{d}u.\] Hence
\begin{equation}\label{prob}
\int_{V_n\cap\{\|u\|\leq cM_n\sqrt{n}\}}\,\mathrm{d}u\leq(2\pi)^{-k/2}(1-2\delta)^{-1}.
\end{equation}
Note that, on $V_n^c$,
$$\pi(\theta|\hatl_n)\frac{(2\pi)^{k/2}}{n^{k/2}|I(\theta_0)|^{1/2}}< 1-2\delta,$$
so that
$$\pi(\theta|\hatl_n)(1+\delta)\frac{(2\pi)^{k/2}}{n^{k/2}|I(\theta_0)|^{1/2}}-1<-\delta$$
and we can bound from below the $L_1$-distance between the two posteriors:
on $A_n\cap \Omega_{n,\,\delta}$,
\begin{equation*}
\begin{split}
\int_\Theta|\pi(\theta|\hatl_n,\,\Data)-\pi(\theta|\lambda,\,\Data)|\,\mathrm{d}\theta&\geq
\int_{V_n^c\cap U_{M_n}}|\pi(\theta|\hatl_n,\,\Data)-\pi(\theta|\lambda,\,\Data)|\,\mathrm{d}\theta\\
&\geq \delta \int_{V_n^c \cap U_{M_n}}e^{-n(\theta-\hat{\theta}_n)'I(\theta_0)
(\theta-\hat{\theta}_n)/2}\frac{n^{k/2}|I(\theta_0)|^{1/2}}{(2\pi)^{k/2}}\,\mathrm{d}\theta\\
&\geq \delta \int_{V_n^c\cap\{\|u\|\leq cM\sqrt{\log n}\}}\phi(u)\,\mathrm{d}u,
\end{split}
\end{equation*}
for some $c>0$, since $I(\theta_0)$ is positive definite and where $\phi(\cdot)$ is the density of a standard Gaussian distribution on $\R^k$. By choosing $L>0$ large enough and using \eqref{prob},
\begin{equation*}
\begin{split}
\int_{V_n^c\cap\{\|u\|\leq cM\sqrt{\log n}\}}\phi(u)\,\mathrm{d}u&\geq\int_{V_n^c\cap\{\|u\|\leq L\}} \phi(u)\,\mathrm{d}u\\
&\geq\phi(L)\int_{V_n^c\cap\{\|u\|\leq L\}}\,\mathrm{d}u\\
&=\phi(L)\pt{\frac{\pi^{k/2}L^k}{\Gamma(k/2+1)}-\int_{V_n\cap\{\|u\|\leq L\}}\,\mathrm{d}u}\\
&\geq\phi(L)\pt{\frac{\pi^{k/2} L^k}{\Gamma(k/2+1)}-\int_{V_n\cap\{\|u\|\leq cM\sqrt{\log n}\}}\,\mathrm{d}u}\\
&\geq \phi(L)\frac{\pi^{k/2}L^k}{2\Gamma(k/2+1)}>0,
\end{split}
\end{equation*}
which completes the proof.
\end{proof}


\end{document}